\numberwithin{equation}{section}
\theoremstyle{plain}
\newtheorem{theorem}[equation]{Theorem}
\newtheorem{proposition}[equation]{Proposition}
\newtheorem{lemma}[equation]{Lemma}
\newtheorem{corollary}[equation]{Corollary}
\theoremstyle{remark}
\newtheorem{remark}[equation]{Remark}
\theoremstyle{definition}
\newtheorem{definition}[equation]{Definition}
\newcommand{\Ric}{{\rm Ric}}
\newcommand{\Rm}{{\rm Rm}}
\newcommand{\Vol}{{\rm Vol}}
\newcommand{\vol}{{\rm vol}}
\newcommand{\dN}{\mathbb{N}}
\newcommand{\cI}{\mathcal{I}}
\newcommand{\cN}{\mathcal{N}}
\newcommand{\cV}{\mathcal{V}}
\newcommand{\cW}{\mathcal{W}}
\newcommand{\R}{\mathbb{R}}
\renewcommand{\epsilon}{\varepsilon}
\begin{document}
\title{New logarithmic Sobolev inequalities and \\an $\epsilon$-regularity theorem for the Ricci flow}
\author{Hans-Joachim Hein}
\address{Department of Mathematics, Imperial College, London SW7 2AZ, United Kingdom}
\email{h.hein@imperial.ac.uk}
\author{Aaron Naber}
\address{Department of Mathematics, MIT, Cambridge MA 02139, USA}
\email{anaber@math.mit.edu}
\thanks{HJH acknowledges postdoctoral support under EPSRC Leadership Fellowship EP/G007241/1. }
\date{\today}
\begin{abstract}
In this note we prove a new $\epsilon$-regularity theorem for the Ricci flow. Let $(M^n,g(t))$ with $t\in [-T,0]$ be a Ricci flow and $H_{x_0}$ the conjugate heat kernel centered at a point $(x_0,0)$ in the final time slice. Substituting $H_{x_0}$ into Perelman's $\cW$-functional produces a monotone function
$\cW_{x_0}(s)$ of $s \in [-T,0]$, the pointed entropy, with $\cW_{x_0}(s) \leq 0$, and $\cW_{x_0}(s) = 0$ iff $(M,g(t))$ is isometric to the trivial flow on $\R^n$. Our main theorem asserts
the following:  There exists an $\epsilon>0$,
depending only on $T$ and on lower scalar curvature and $\mu$-entropy bounds for $(M,g(-T))$, such that $\cW_{x_0}(s) \geq -\epsilon$ implies $|\Rm|\leq r^{-2}$ on $P_{\epsilon r}(x_0,0)$, where $r^2 = |s|$ and $P_\rho(x,t) \equiv B_\rho(x,t)\times (t-\rho^2,t]$.

The main technical challenge of the theorem is to prove an effective Lipschitz bound in $x$ for the $s$-average of $\cW_x(s)$.  To accomplish this, we require a new log-Sobolev inequality. It is well known by Perelman that the metric measure spaces $(M,g(t),d\vol_{g(t)})$ satisfy a log-Sobolev; however we prove that this is also true for the conjugate heat kernel weighted spaces $(M,g(t),H_{x_0}(-,t)\,d\vol_{g(t)})$.
Our log-Sobolev constants for these weighted spaces are in fact universal and sharp.

The weighted log-Sobolev has other consequences as well, including an average Gaussian
upper bound on the conjugate heat kernel that only depends on a two-sided scalar curvature bound.
\end{abstract}
\maketitle
\markboth{Hans-Joachim Hein and Aaron Naber}{New logarithmic Sobolev inequalities and an $\epsilon$-regularity theorem for the Ricci flow}
\section{Introduction}\label{s:intro}
Throughout this paper we will assume that the pair
\begin{align}\label{e:RF}
(M^n, g(t)),\;\,t\in [-T,0],
\end{align}
is a smooth Ricci flow. For simplicity we assume each time slice is complete of bounded geometry. It may also be convenient to assume $M$ is compact when we rely on Perelman's monotonicity formula. However, our results will ultimately only depend on entropy and scalar curvature bounds.

Given a point $(x_0,0)\in M\times [-T,0]$ on the final time slice we write
\begin{equation}\label{e:conjhk}
H_{x_0}(y,s) = H(x_0,0\,|\,y,s) = (4\pi |s|)^{-\frac{n}{2}}\exp(-f_{x_0}(y,s))
\end{equation}
for the conjugate heat kernel based at $(x_0,0)$, and
\begin{equation}\label{e:conj_heat_meas}
d\nu_{x_0}(y,s) = H_{x_0}(y,s)\,d\vol_{g(s)}(y)
\end{equation}
for the associated probability measures on $M$; see also Definition \ref{d:conjugate_hk}.

\begin{definition}[Perelman \cite{P}]\label{d:entropy}
Let $(M^n,g)$ be a Riemannian manifold. Given an $f \in C^\infty(M)$ and
$\tau > 0$ such that $(4\pi\tau)^{-\frac{n}{2}}e^{-f}d\vol$ has unit mass, we define associated entropy functionals
\begin{align}\label{e:entropy}
 \cW(g,f,\tau) &\equiv \int [\tau(|\nabla f|^2 + R)+f-n](4\pi \tau)^{-\frac{n}{2}}e^{-f}d\vol,\\
\mu(g,\tau) &\equiv \inf \left\{\mathcal{W}(g,f,\tau) : \int (4\pi\tau)^{-\frac{n}{2}} e^{-f} d\vol = 1\right\}.
\end{align}
\end{definition}

Perelman \cite{P} discovered that $\cW$ is nondecreasing in $t = t_0 - \tau$ ($t_0 \in \R$) if $(4\pi\tau)^{-\frac{n}{2}}e^{-f}$ evolves by the conjugate heat equation coupled to the Ricci flow in the time variable $t < t_0$; see Theorem \ref{t:pfirstvar}. As a consequence, the quantity $\mu(g(t), t_0-t)$ is nondecreasing in $t  < t_0$ along any Ricci flow.

\subsection{Pointed entropy and $\epsilon$-regularity}\label{s:ptd_ent_eps_reg}

Our primary concern in this paper will be a localized version of the $\cW$-entropy. Namely, given a Ricci flow  as in (\ref{e:RF}), there exists for each point $(x_0,0)$ in the $0$-time slice and $s\in [-T,0)$ the canonical metric probability space $(M^n,g(s), d\nu_{x_0}(s))$,
where $d\nu_{x_0}(s)\equiv H_{x_0}(-,s)\,d\vol_{g(s)}$ is the conjugate heat kernel measure as in (\ref{e:conj_heat_meas}).

\begin{definition}\label{d:pointed_entropy} The pointed entropy at scale $\sqrt{|s|}$ based at $x_0$ is defined by
\begin{align}
 \cW_{x_0}(s)\equiv \cW(g(s),f_{x_0}(s),|s|).
\end{align}
\end{definition}

It is a consequence of Perelman's gradient formula, see Proposition \ref{p:ptd_entropy}, that
\begin{align}
\lim_{s\to 0}\cW_{x_0}(s) = 0, \;\,\frac{d}{ds}\cW_{x_0}(s) \geq 0.
\end{align}
Moreover, the pointed entropy at a given point and scale vanishes if and only if the flow is isometric to
 the trivial flow on Euclidean space.  One can view this as an example of a rigidity theorem.

What is often useful in such situations is an ``almost rigidity'' statement: If the pointed entropy {\it at a single point $x_0$} is close to $0$, then the Ricci flow ought to be smoothly close to flat $\R^n$ {\it near $x_0$}. There are many examples of such statements in the literature. For instance, in the case of Einstein manifolds one has an $\epsilon$-regularity theorem based on the volume ratio $\cV_x(r)\equiv \log(\Vol(B_r(x))/\omega_n r^n)$ going back to Anderson \cite{anderson}, which is at the backbone of the regularity theory in \cite{CN}.

To make all of this precise we introduce the \emph{regularity scale} of a point. Given $(x,t)\in M\times [-T,0]$ a bound on the curvature $|\Rm|(x,t)$ at this point gives remarkably little information. On the other hand, a bound on the curvature in a spacetime neighborhood of $(x,t)$ tells us everything about the local geometry of the Ricci flow.  Thus we make the following definition:

\begin{definition}
Given a Ricci flow $(M^n,g(t))$ as in (\ref{e:RF}) we define the following.
\begin{enumerate}
\item Given $(x,t)\in M\times [-T,0]$ and $r>0$ with $-T\leq t-r^2$ we define the parabolic ball
\begin{align}
P_r(x,t)\equiv B_r(x,t)\times (t-r^2,t].
\end{align}
\item Given $(x,t)\in M\times [-T,0]$ we define the regularity scale
\begin{align}
r_{|\Rm|}(x,t)\equiv \sup\{r>0:\sup_{P_r(x,t)}|\Rm|\leq r^{-2}\}.
\end{align}
\end{enumerate}
\end{definition}

\begin{remark}
The regularity scale is defined to be scale invariant: If $r_{|\Rm|}(x,0) = r$ and if we rescale the Ricci flow by $r^{-1}$, so that $P_r(x,0)$ is mapped to $P_1(\tilde x,0)$, then we have $|\Rm| \leq 1$ on $P_{1}(\tilde x,0)$.
\end{remark}
\begin{remark}
The regularity scale controls not only the curvature but also its derivatives.  Namely, by standard parabolic estimates there exists for each $k\in \dN$ a dimensional constant $C(n,k)$ such that if $r_{|\Rm|}(x,t)\equiv r$ then we have the estimates
\begin{align}
\sup_{P_{\frac{r}{2}}(x,t)}|\nabla^{k}\Rm|\leq C(n,k)r^{-2-k}.
\end{align}
\end{remark}

Our main theorem then takes the following form: \emph{There exists an $\epsilon>0$ such that $\cW_{x_0}(s)\geq -\epsilon$ implies $r_{|\Rm|}(x_0,0)^2\geq\epsilon |s|$.} Of course the use of such an estimate is only as good as what $\epsilon$ depends on, and we are able to bound $\epsilon$ below purely in terms of $T$ and lower scalar curvature and $\mu$-entropy bounds at the initial time $-T$, which is really the most one might hope for.

\begin{theorem}\label{t:eps_regularity}
For each $C > 0$ there exists an $\epsilon = \epsilon(n,C) > 0$ such that the following holds. Let $(M^n, g(t))$ be any Ricci flow as in \eqref{e:RF} such that
\begin{align}\label{e:basic_control}
R[g(s)]\geq-\frac{C}{|s|},\;\,\inf_{\tau \in (0,2|s|)}\mu(g(s),\tau)\geq -C,
\end{align}
for some $s \in [-T,0)$. If the pointed entropy satisfies
\begin{align}\label{e:entropy_small}
\cW_{x_0}(s) \geq -\epsilon
\end{align}
for some point $x_0$ in the $0$-time slice, then we have
\begin{align}\label{e:regularity_bound}
r_{|\Rm|}(x_0,0)^2 \geq\epsilon |s|.
\end{align}
\end{theorem}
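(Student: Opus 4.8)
The plan is to argue by contradiction and blow-up. By a parabolic rescaling we may assume the scale $s$ in \eqref{e:basic_control} is $s=-1$, so that \eqref{e:basic_control}--\eqref{e:entropy_small} become $R[g(-1)]\geq-C$, $\mu(g(-1),\tau)\geq-C$ for $\tau\in(0,2)$, and $\cW_{x_0}(-1)\geq-\epsilon$; Perelman's monotonicity of $t\mapsto\mu(g(t),t_0-t)$ and the standard evolution of $R_{\min}$ then propagate the bounds $R\geq-C$ and $\mu(g(t),\sigma)\geq-C$ (for $\sigma\lesssim1$) forward to all $t\in(-1,0]$. If the theorem were false we would obtain Ricci flows $(M_i^n,g_i(t))$ obeying these hypotheses with $\epsilon_i\to0$ and points $x_{0,i}$ in the $0$-slice with $\cW_{x_{0,i}}(-1)\geq-\epsilon_i$ but $r_{|\Rm|}(x_{0,i},0)^2<\epsilon_i$, the last of which yields, for each $i$, a point in $P_{\sqrt{\epsilon_i}}(x_{0,i},0)$ where $|\Rm|_{g_i}>\epsilon_i^{-1}$.

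The first step is to upgrade single-point, single-scale smallness of the entropy into something usable. By Perelman's gradient formula (Proposition \ref{p:ptd_entropy}) the function $u\mapsto\cW_x(u)$ is nonpositive, nondecreasing, and tends to $0$ as $u\to0$; hence $\cW_{x_{0,i}}(-1)\geq-\epsilon_i$ already forces $\cW_{x_{0,i}}(u)\in[-\epsilon_i,0]$ for all $u\in[-1,0)$, so smallness propagates freely to all smaller scales. To move the \emph{base point} one needs an effective estimate, and this is the crux: for the $u$-averaged pointed entropy $\overline{\cW}_x\equiv\int_{-1/2}^{0}\cW_x(u)\,du$ there should be a modulus of continuity $\Psi=\Psi_{n,C}$ with $\Psi(0^{+})=0$ such that $\lvert\overline{\cW}_x-\overline{\cW}_{x'}\rvert\leq\Psi\bigl(d_{g(0)}(x,x')\bigr)$ for all base points $x,x'$ in the $0$-slice. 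This is precisely where the new weighted log-Sobolev inequality for the spaces $(M,g(u),d\nu_x(u))$ enters: it is used to compare the conjugate heat kernels $H_x$ and $H_{x'}$, and in particular to control the gradient term $\lvert s\rvert\,\lvert\nabla f_x\rvert^{2}$ in \eqref{e:entropy}, by quantities depending only on $d(x,x')$, $n$ and $C$, and not on the flow. Granting this, $\overline{\cW}_{x_{0,i}}\in[-\epsilon_i,0]$ together with $d_{g_i(0)}(\cdot,x_{0,i})\leq\sqrt{\epsilon_i}$ gives $\overline{\cW}_x\geq-\tilde\epsilon_i$ with $\tilde\epsilon_i\to0$ on $B_{\sqrt{\epsilon_i}}(x_{0,i},0)$; since $\cW_x\leq0$ and is nondecreasing, this forces $\cW_x(u)\geq-C\tilde\epsilon_i$ for all such $x$ and all $u\in[-1/4,0)$.

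Next one runs a Perelman-type point-selection near $(x_{0,i},0)$ to obtain $(y_i,t_i)$ with $Q_i\equiv|\Rm|_{g_i}(y_i,t_i)\to\infty$ (indeed $Q_i\gtrsim\epsilon_i^{-1}$), lying close to the $0$-slice and, at the scale $Q_i^{-1/2}$ of its own curvature, close to $x_{0,i}$, and such that $|\Rm|_{g_i}\leq4Q_i$ on the backward parabolic ball $P_{A_iQ_i^{-1/2}}(y_i,t_i)$ for some $A_i\to\infty$. Parabolically rescale the $i$-th flow by $Q_i$; by scale-invariance of $\cW$ the entropy bounds are unchanged, $|\Rm|(\tilde y_i,\tilde t_i)=1$ with $\tilde t_i$ bounded, $|\Rm|\leq4$ on $P_{A_i}(\tilde y_i,\tilde t_i)$, and — using the spatial estimate of the previous paragraph to, if necessary, shift the base point to some $x_i'$ with $d_{g_i(0)}(x_i',x_{0,i})\to0$ but $d_{g_i(0)}(x_i',y_i)=O(Q_i^{-1/2})$ — the rescaled base point stays at bounded distance from $\tilde y_i$ and still has small pointed entropy. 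Perelman's no-local-collapsing theorem, fed by the $\mu$-entropy bound, yields uniform volume and hence injectivity-radius lower bounds at these scales, and with Shi's estimates and Hamilton's compactness theorem we extract a $C^\infty_{\mathrm{loc}}$ Cheeger--Gromov--Hamilton limit $(M_\infty^n,g_\infty(t))$, a complete ancient Ricci flow carrying a distinguished point where $|\Rm|_{g_\infty}=1$; in particular it is \emph{not} the trivial flow on $\R^n$. On the other hand the renormalised conjugate heat kernels converge to the conjugate heat kernel $H_\infty$ of the limit — stability here again resting on the Gaussian-type upper bounds on $H_x$ that follow from the weighted log-Sobolev — and $\cW$ is continuous under this convergence, so from $\cW_{x_i'}(u)\in[-C\tilde\epsilon_i,0]$ at the relevant scales we obtain $\cW_\infty\equiv0$ on the limit. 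By the rigidity part of Proposition \ref{p:ptd_entropy} this forces $(M_\infty,g_\infty)$ to be the trivial flow on $\R^n$, contradicting $|\Rm|_{g_\infty}=1$ at the distinguished point. This contradiction proves Theorem \ref{t:eps_regularity}.

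The main obstacle is the spatial continuity estimate for $\overline{\cW}_x$ of the second paragraph. The pointed entropy depends on the entire conjugate heat kernel $H_x$, which varies with the base point in a subtle way, and the term $\lvert s\rvert\,\lvert\nabla f_x\rvert^{2}$ in \eqref{e:entropy} makes any crude comparison of $\cW_x$ and $\cW_{x'}$ diverge; one needs integral estimates that are uniform over the whole class of flows subject only to \eqref{e:basic_control}, and this is exactly what the sharp, universal weighted log-Sobolev inequality of the paper supplies. A secondary technical point — also a consequence of the Gaussian bounds coming from that inequality — is the stability of the conjugate heat kernel, and hence of $\cW_x(u)$, under the blow-up limit. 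Once these are in place, the remaining ingredients — point selection, Perelman noncollapsing, Hamilton compactness, and the rigidity — are by now standard.
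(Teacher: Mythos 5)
Your overall strategy---replace $\cW_x$ by its time average, prove a spatial modulus of continuity for that average using the new weighted inequalities together with heat-kernel gradient estimates, spread the smallness to a definite ball, then run a contradiction/blow-up ending in noncollapsing, compactness, Fatou applied to the monotonicity formula, and soliton rigidity---is the paper's strategy. Your $\overline{\cW}_x$ is (up to normalization) the pointed Nash entropy of Definition \ref{d:nash_entropy}, and the continuity you postulate is Theorem \ref{t:ent_cont}; note, however, that the mechanism is not ``controlling $|s|\,|\nabla f_x|^2$'': by Proposition \ref{p:nash_entropy}, $\cN_x(s)=\int f_x\,d\nu_x(s)-\frac n2$ contains no gradient term, and the paper instead bounds $\nabla_x(f_xH_x)$ in $L^1$ using Zhang's estimate $|\nabla_x f_x|^2\le \frac{C'}{|s|}(C'+f_x)$ (Corollary \ref{c:hk_grad_est}) together with the weighted Poincar\'e \eqref{e:poinc} to control $\|f_x\|_{L^2(d\nu_x)}$. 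Your observation that average smallness plus monotonicity yields pointwise smallness of $\cW_x(u)$ at definite fractions of the scale is correct.

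The genuine gap is in your blow-up step. You select a spacetime point $(y_i,t_i)$ of large curvature with only \emph{backward} parabolic control $|\Rm|\le 4Q_i$ on $P_{A_iQ_i^{-1/2}}(y_i,t_i)$, while the conjugate heat kernel whose entropy is small stays based at a time-$0$ point. Nothing in this selection controls the flow on the forward region between $t_i$ and $0$: the claim that $\tilde t_i=Q_it_i$ stays bounded is not guaranteed (point selection gives $|t_i|\lesssim A_i^2\epsilon_i$ but $Q_i$ may be far larger than $\epsilon_i^{-1}$); the time-$0$ distance from your shifted base point $x_i'$ to $y_i$ cannot be compared with the time-$t_i$ distance without curvature bounds in between; and, most importantly, both the convergence of $H_{\tilde x_i'}$ near $(\tilde y_i,\tilde t_i)$ (with nondegenerate limit measure there) and the existence of the limit flow up to time $0$ require exactly the forward control you lack. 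Without it the final contradiction fails: a shrinking-soliton identity established only for $t\le\tilde t_\infty<0$, with a non-flat point at time $\tilde t_\infty$, is perfectly consistent (round shrinking cylinders have bounded curvature away from the singular time), so ``entropy $\equiv0$ in the limit'' does not constrain the curvature at $(\tilde y_\infty,\tilde t_\infty)$ unless the limit reaches $t=0$ with bounded curvature there. The paper avoids all of this by anchoring the point-picking in the $0$-slice: in Proposition \ref{p:eps_regularity_entropy} one minimizes $w(y)=r_{|\Rm|}(y,0)/d_{g(0)}(y,\partial B_1)$, so the blow-up point lies in the final time slice and coincides with the base point of the heat kernel, minimality gives $r_{|\Rm|}\ge\frac12$ on larger and larger time-$0$ balls (hence curvature bounds on parabolic neighborhoods reaching time $0$), the limit exists on $[-\frac1{16},0]$, the soliton has singular time $0$, and bounded curvature as $t\to0$ forces flatness. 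If you re-anchor your selection at time $0$ on the regularity scale, basing each entropy at the point itself at its own scale (which your monotonicity and averaging remarks permit), your argument becomes the paper's.
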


\begin{remark}
One typically takes the point of view that $(M,g(-T))$ is a fixed Riemannian manifold but that the runtime $T$ of the flow is variable. One would then like to \emph{derive} (\ref{e:basic_control}) from information about the initial time slice $(M, g(-T))$.
The appropriate conditions to impose are
\begin{align}\label{e:basic_control_2}
R[g(-T)] \geq -\frac{C}{|s|},\;\,\inf_{\tau \in (T-|s|,T+|s|)}\mu(g(-T),\tau) \geq -C,
\end{align}
because both $\inf R[g(t)]$ and $\mu(g(t), t_0 - t)$  for any fixed $t_0 > t$ are nondecreasing in $t$. Even if one then only cares about fixed values of $s$, (\ref{e:basic_control_2}) still exhibits an explicit $T$-dependence in the entropy condition which may degenerate as $T \to\infty$. This issue is familiar from \cite{P}.
\end{remark}

The known $\epsilon$-regularity theorems for the Ricci flow either allow $\epsilon$ to depend on a type I sectional curvature bound \cite{EMT}, or require a smallness condition such as (\ref{e:entropy_small}) to hold \emph{at every $x$ in a definite neighborhood of $x_0$} \cite{Ni}, which clearly follows from (\ref{e:entropy_small}) under a type I condition.

In order to understand the technical difficulties encountered in the proof of Theorem \ref{t:eps_regularity}, let us give a brief outline of the argument. We would like to follow Anderson's proof \cite{anderson} of his $\epsilon$-regularity theorem for Einstein manifolds
as much as possible. Namely, assume the theorem fails. Then for all $\epsilon > 0$ we can find Ricci flows $(M_\epsilon,g_\epsilon(t))$ and points $(x_\epsilon,0)$ such that (\ref{e:basic_control}) and (\ref{e:entropy_small}) are satisfied but (\ref{e:regularity_bound}) fails. We can assume $s_\epsilon = 1$ by rescaling. After a careful point picking we would then like to say that $x_\epsilon$ more or less minimizes
the function $r_{|\Rm|}(x,0)$ in a small but definite ball \emph{while still satisfying $\cW_{x_\epsilon}(1) \geq -2\epsilon$}. Then,
after rescaling so that $r_{|\Rm|}(\tilde x_\epsilon,0)=1$, we can take a subsequence
converging smoothly to a complete pointed Ricci flow $(\tilde{M}_\infty,\tilde g_\infty(t), \tilde{x}_\infty)$ such that $r_{|\Rm|}(\tilde x_\infty,0)=1$, yet $\cW_{\tilde x_\infty}(s)=0$ for all $s$ and hence $(\tilde{M}_\infty,\tilde g_\infty(t)) \cong \R^n$, which is the desired contradiction.

However, in order for this to go through we require $\cW_x(s)$ to depend on $x$ in a Lipschitz manner (for the volume ratio $\cV_x(r)$ in the Einstein case this is clear by volume comparison). We are in fact unable to prove this assuming only (\ref{e:basic_control}). What does turn out to be possible under (\ref{e:basic_control}), though, is to obtain Lipschitz control on the \emph{$s$-average of $\cW_s(x)$}, which is still enough for our purposes. To accomplish this we introduce new log-Sobolev estimates for heat kernel measures. In addition, this $s$-average turns out to be an interesting monotone quantity in its own right; we will put this to some use in obtaining new integral bounds for the conjugate heat kernel, see Theorem \ref{t:hk_upper_grad_int}.

\subsection{Log-Sobolev inequalities for the conjugate heat kernel measure}
Consider a smooth metric probability space $(M,g,d\nu)$, where $d\nu = e^{-f}d\vol_g$. If the Bakry-{\'E}mery condition
\begin{align}\label{e:BE}
\Ric+\nabla^2f\geq \frac{1}{2} g
\end{align}
is satisfied, then a celebrated classical theorem from \cite{BE} asserts that $(M,g, d\nu)$ satisfies a log-Sobolev inequality.  That is, for every smooth function $\phi$ with compact support on $M$ we have
\begin{align}\label{e:log_sob}
\int\phi^2\, d\nu = 1 \;\, \Longrightarrow\;\,
\int \phi^2\log \phi^2 \,d\nu \leq 4\int |\nabla\phi|^2\,d\nu.
\end{align}
The case of flat $\R^n$ equipped with the Gaussian measure $d\nu = (4\pi)^{-\frac{n}{2}}\exp(-\frac{1}{4}|x|^2)dx$ is already very interesting; this case is equivalent to the original log-Sobolev inequality proved by Gross \cite{gross}.

The log-Sobolev (\ref{e:log_sob}) can be used to prove that the operator $\Delta_f \phi \equiv \Delta \phi - \langle \nabla f, \nabla \phi\rangle $, which is $d\nu$-selfadjoint, has discrete
spectrum. Linearizing (\ref{e:log_sob}) around $\phi \equiv 1$ yields a Poincar{\'e} for $d\nu$ that tells us that
the smallest positive eigenvalue of $\Delta_f$ is at least $\frac{1}{2}$, and it follows from \cite{BE} that equality holds if and only if $M$ splits off a line, in which case the $\frac{1}{2}$-eigenfunction is linear.

\begin{remark}
If $M$ is compact, an improvement depending on ${\rm diam}\,M$ for the spectral gap $\frac{1}{2}$  was recently proved in \cite{futaki}. Intriguingly, on every nontrivial normalized gradient shrinking Ricci soliton, $\Ric + \nabla^2 f = \frac{1}{2}g$, the soliton function $f$ itself is always a $1$-eigenfunction of $\Delta_f$.
\end{remark}

In the context of a Ricci flow $(M^n,g(t))$ with its conjugate heat kernel measures (\ref{e:conj_heat_meas}), Perelman's monotonicity formula (\ref{e:perelmono}) would suggest that the metric probability spaces $(M, g(s), d\nu_{x_0}(s))$ are typically well approximated by shrinking solitons. Thus the Bakry-{\'E}mery inequality (\ref{e:log_sob}) may at least help to motivate (if not prove) the following result, which is our main technical tool.

\begin{theorem}\label{t:ineqs}
Let $(M^n,g(t))$ be a Ricci flow as in \eqref{e:RF}. Fix $x_0 \in M$ and $s \in [-T,0)$.

{\rm (1)} For all $\phi \in C^\infty_0(M)$ with $\int \phi\,d\nu_{x_0}(s) = 0$,
\begin{equation}\label{e:poinc}
\int \phi^2 \,d\nu_{x_0}(s) \leq 2|s|\int |\nabla \phi|^2_{g(s)}\,d\nu_{x_0}(s).
\end{equation}
Equality holds if and only if either $\phi \equiv 0$, or $(M, g(t)) = ({M}', {g}'(t)) \times (\R, dz^2)$ isometrically for all $t \in [s,0]$ with $z(x_0) = 0$ and $\phi = \lambda z$ for some constant $\lambda \in \R^*$.

{\rm (2)} For all $\phi \in C^\infty_0(M)$ with $\int \phi^2\,d\nu_{x_0}(s) = 1$,
\begin{equation}\label{e:logsob}
\int \phi^2 \log \phi^2\,d\nu_{x_0}(s) \leq 4|s|\int |\nabla \phi|^2_{g(s)}\,d\nu_{x_0}(s).
\end{equation}
Equality holds if and only if either $\phi \equiv 1$, or $(M, g(t)) = ({M}', {g}'(t)) \times (\R, dz^2)$ isometrically for all $t \in [s,0]$ with $z(x_0) = 0$ and $\phi = \exp({\lambda z} - 2\lambda^2|s|)$ for some constant $\lambda \in \R^*$.
\end{theorem}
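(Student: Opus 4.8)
The plan is to adapt the classical Bakry-{\'E}mery strategy for logarithmic Sobolev inequalities, using the moving family of conjugate heat kernel measures $\nu_{x_0}(t)$, $t\in[s,0]$, in place of the invariant measure of a fixed diffusion semigroup. The point is that $\nu_{x_0}(t)\to\delta_{x_0}$ as $t\to0$: the role of ``equilibrium'' is played by the collapse at the final time, which is reached after time exactly $|s|$, and this is what forces the sharp constant $4|s|$.

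So, fix $\phi\in C^\infty_0(M)$ with $\int\phi^2\,d\nu_{x_0}(s)=1$ and let $u=u_t$, $t\in[s,0]$, solve the forward heat equation $\partial_t u=\Delta_{g(t)}u$ along the Ricci flow with $u_s=\phi^2$. Then $u_t$ is smooth with Gaussian spatial decay, $u_t>0$ for $t>s$ by the strong maximum principle, and a direct check gives $\partial_t\bigl(H_{x_0}\,d\vol_{g(t)}\bigr)=-\Delta H_{x_0}\,d\vol_{g(t)}$; pairing $u_t$ against the conjugate heat flow yields $\frac{d}{dt}\int u_t\,d\nu_{x_0}(t)=\int(\Delta u_t\,H_{x_0}-u_t\,\Delta H_{x_0})\,d\vol=0$, so $\int u_t\,d\nu_{x_0}(t)\equiv1$, and letting $t\to0$ gives $u_0(x_0)=1$. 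Set $N(t)=\int u_t\log u_t\,d\nu_{x_0}(t)$ and $J(t)=\int|\nabla u_t|^2u_t^{-1}\,d\nu_{x_0}(t)=4\int|\nabla\sqrt{u_t}|^2\,d\nu_{x_0}(t)$. A short integration by parts, using the two evolution equations above, gives $N'(t)=-J(t)$; moreover $N(t)\ge0$ by Jensen's inequality and $N(0)=u_0(x_0)\log u_0(x_0)=0$. The crux is to show that $N$ is \emph{convex} on $[s,0]$, i.e.\ $J'(t)\le0$. Granting this, convexity gives $N(0)\ge N(s)+N'(s)(0-s)=N(s)-|s|\,J(s)$, hence $N(s)\le|s|\,J(s)$, which is exactly \eqref{e:logsob} since $J(s)=4\int|\nabla\phi|^2\,d\nu_{x_0}(s)$. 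The Poincar{\'e} inequality \eqref{e:poinc} comes out the same way, with $u_t$ replaced by the linear heat flow $v_t$ of a $\nu_{x_0}(s)$-mean-zero $\phi$: one finds $v_0(x_0)=0$ and $\frac{d}{dt}\int v_t^2\,d\nu_{x_0}(t)=-2\int|\nabla v_t|^2\,d\nu_{x_0}(t)$, and convexity of $t\mapsto\int v_t^2\,d\nu_{x_0}(t)$ then yields \eqref{e:poinc}; alternatively \eqref{e:poinc} is the second-order Taylor expansion of \eqref{e:logsob} at $\phi\equiv1$.

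Everything thus reduces to $J'(t)\le0$ and its linear analogue. To prove it I would differentiate $J(t)$, using $\partial_tg=-2\Ric$, $\partial_tu_t=\Delta u_t$, the evolution equations for $f_{x_0}(t)$ and $d\vol_{g(t)}$, and Bochner's identity for $\Delta|\nabla u_t|^2$, and integrate by parts against $d\nu_{x_0}(t)$ (justified by the Gaussian bounds). The Ricci flow, together with the drift $\nabla f_{x_0}$, should force the curvature terms to cancel exactly as in the proof of \eqref{e:perelmono} (Theorem \ref{t:pfirstvar}), leaving a manifestly nonpositive remainder that is a quadratic expression in $\nabla^2\log u_t$ and the soliton-defect tensor $\Ric+\nabla^2 f_{x_0}-\tfrac1{2|t|}g$. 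The hard part will be carrying out this cancellation cleanly; the natural device is to write the integrand of $J(t)$ in a form adapted to the conjugate heat flow, and one can equivalently organize the bookkeeping through $\cW$, since $\cW$ evaluated on the probability density $u_tH_{x_0}(t)$ equals $|t|\,J(t)-N(t)+\int u_t\bigl(|t|(2\Delta f_{x_0}-|\nabla f_{x_0}|^2+R)+f_{x_0}-n\bigr)\,d\nu_{x_0}(t)$, whose first two terms have $t$-derivative $|t|\,J'(t)$.

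For the rigidity: equality in \eqref{e:logsob} forces $N(s)=|s|\,J(s)$, hence --- by the convexity argument together with $N(0)=0$ --- forces $N$ to be affine on $[s,0]$, so $J'(t)\equiv0$ and the nonpositive remainder above vanishes identically; in particular $\nabla^2\log u_t\equiv0$ for all $t\in[s,0]$. Thus either $u_t\equiv1$, i.e.\ $\phi\equiv1$ (using $\int u_t\,d\nu_{x_0}(t)=1$), or $\nabla\log u_t$ is a nowhere-vanishing parallel vector field; in the latter case the de Rham decomposition gives $(M,g(t))=(M',g'(t))\times(\R,dz^2)$ isometrically, and feeding $\nabla\log u_t$ back into the heat and Ricci flow equations shows the splitting is $t$-independent and the $\R$-factor static. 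Solving $\partial_tu_t=\Delta u_t$ in the $\R$-factor, translating so $z(x_0)=0$, and imposing $\int\phi^2\,d\nu_{x_0}(s)=1$ then identifies $u_t=\exp(2\lambda z-4\lambda^2|t|)$, so $\phi=\sqrt{u_s}=\exp(\lambda z-2\lambda^2|s|)$ for some $\lambda\in\R^*$. The rigidity in \eqref{e:poinc} follows identically from the equality case of the linear version, giving $\phi=\lambda z$.
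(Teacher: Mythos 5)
Your framework is essentially a reformulation of the paper's argument: your identities $\frac{d}{dt}\int u_t\,d\nu_{x_0}(t)=0$ and $N'(t)=-J(t)$ are the homotopy principle \eqref{e:homotopy} combined with \eqref{e:commutator1}, your convexity step $N(0)\geq N(s)+|s|N'(s)$ with $N(0)=0$ reproduces the integrated form of \eqref{e:heat_homotopy}, and your treatment of the equality case and of the Poincar\'e inequality via the linear flow is fine once the dissipation inequality is available. The problem is that the one genuinely nontrivial analytic point --- $J'(t)\leq 0$, and its linear analogue $\frac{d}{dt}\int|\nabla v_t|^2\,d\nu_{x_0}(t)\leq 0$ --- is exactly what you leave unproven (``the hard part will be carrying out this cancellation cleanly''), and the route you sketch for it is misdirected. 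You predict a Perelman-type cancellation driven by the drift $\nabla f_{x_0}$, with a remainder that is quadratic in $\nabla^2\log u_t$ \emph{and} the soliton-defect tensor $\Ric+\nabla^2 f_{x_0}-\frac{g}{2|t|}$. No such defect term can appear: the inequality is universal and sharp for \emph{every} Ricci flow, whereas a remainder involving the soliton defect has no sign in general. Likewise, your suggestion to organize the computation through $\cW$ evaluated on the density $u_tH_{x_0}(t)$ does not connect to \eqref{e:perelmono}, because $u_tH_{x_0}(t)$ does not solve the conjugate heat equation (one has $\Box^*(u_tH_{x_0})=-2\,\div(H_{x_0}\nabla u_t)\neq 0$), so Perelman's monotonicity is not applicable to it.

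The actual mechanism is simpler and is the heart of the paper's proof. The conjugate-heat weight enters only through the homotopy principle: $\frac{d}{dt}\int U_t\,d\nu_{x_0}(t)=\int \Box_t U_t\,d\nu_{x_0}(t)$, with no curvature or drift terms, so everything reduces to a pointwise computation of $\Box U_t$. For $U_t=u_t|\nabla\log u_t|^2$ the parabolic Bochner formula \eqref{e:rf_bochner} --- in which the Ricci term from $\partial_t g=-2\Ric$ cancels the Ricci term of the elliptic Bochner identity --- gives the exact identity $\Box\bigl(u_t|\nabla\log u_t|^2\bigr)=-2u_t|\nabla^2\log u_t|^2$ (this is \eqref{e:commutator2} with $\psi(x)=1/x$), hence
\begin{equation*}
J'(t)=-2\int u_t\,|\nabla^2\log u_t|^2_{g(t)}\,d\nu_{x_0}(t)\leq 0,
\end{equation*}
and similarly $\Box|\nabla v_t|^2=-2|\nabla^2 v_t|^2$ for the Poincar\'e case. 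This is precisely the paper's ``second application of the homotopy principle,'' and it also supplies the exact rigidity statement you invoke ($\nabla^2\log u_t\equiv 0$, resp.\ $\nabla^2 v_t\equiv 0$, forcing the splitting). So the gap is localized but it is the central one: without identifying this curvature-free Bochner cancellation, your outline does not yet constitute a proof, and as sketched the cancellation you expect would not materialize in the form you describe.
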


\begin{remark} Perelman's monotonicity formula implies an unweighted log-Sobolev inequality (\ref{e:rflogsob}) whose optimal constant depends on $T$ and on various bounds for the geometry of $(M, g(-T))$. The weighted inequalities in Theorem \ref{t:ineqs} on the other hand are sharp and completely universal. As far as we can tell, however, the applications of (\ref{e:rflogsob}) and (\ref{e:logsob}) are essentially disjoint.
\end{remark}

\begin{remark}
A static version of Theorem \ref{t:ineqs} for the heat kernel measure on complete Riemannian manifolds with ${\rm Ric} \geq 0$ was proved independently by Bakry-Ledoux \cite{BL} and (with a spurious extra factor of $n$) Bueler \cite{B}.
These papers inspired our proof of Theorem \ref{t:ineqs}.
\end{remark}

\subsection{Integral and pointwise bounds for the conjugate heat kernel}
Before returning to our discussion of the $\epsilon$-regularity result, Theorem \ref{t:eps_regularity}, we wish to explain an interesting consequence of Theorem \ref{t:ineqs}. The basic idea is to test (\ref{e:logsob}) with functions $\phi$ that are well adapted to the metric geometry; this is the so-called ``Herbst argument'' in metric measure theory \cite{L}.

The following appears to be the sharpest possible result such methods can yield.

\begin{theorem}\label{t:concentration}
Let $(M^n, g(t))$ be a Ricci flow as in \eqref{e:RF} and let $d\nu = d\nu_{x_0}(s)$ be a conjugate heat kernel measure as in \eqref{e:conj_heat_meas}. Then the Gaussian concentration inequality
\begin{equation}\label{e:concentration}
\nu(A)\nu(B) \leq \exp\left(-\frac{1}{8|s|}{\rm dist}_{g(s)}(A,B)^2\right)
\end{equation}
holds for all $A, B \subseteq M$. Here ${\rm dist}$ refers to the usual set distance, not the Hausdorff distance.
\end{theorem}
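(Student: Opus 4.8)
The plan is to run the classical ``Herbst argument'' \cite{L}, whereby the log-Sobolev inequality \eqref{e:logsob} controls exponential moments of $1$-Lipschitz functions, which in turn is equivalent to Gaussian concentration. Fix $s \in [-T,0)$, write $d\nu \equiv d\nu_{x_0}(s)$ and $\nabla \equiv \nabla_{g(s)}$, and let $\dist_{g(s)}$ denote the distance of $g(s)$. The first step is the exponential moment bound: for every bounded $1$-Lipschitz $F \colon M \to \R$ and every $\lambda \in \R$,
\begin{equation*}
H(\lambda) \equiv \int e^{\lambda F}\,d\nu \;\le\; \exp\!\Bigl(\lambda \int F\,d\nu + |s|\,\lambda^2\Bigr).
\end{equation*}
To see this, set $\phi \equiv e^{\lambda F/2}/\sqrt{H(\lambda)}$, so that $\int \phi^2\,d\nu = 1$ and, since $|\nabla F| \le 1$ a.e., $|\nabla\phi|^2 \le \tfrac{\lambda^2}{4}\phi^2$. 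Feeding $\phi$ into \eqref{e:logsob} (after the routine regularization discussed below) gives $4|s|\int|\nabla\phi|^2\,d\nu \le |s|\lambda^2$, while a direct computation shows $\int\phi^2\log\phi^2\,d\nu = \lambda H'(\lambda)/H(\lambda) - \log H(\lambda)$; hence $\lambda H'(\lambda) - H(\lambda)\log H(\lambda) \le |s|\lambda^2 H(\lambda)$. Since $H$ is smooth with $H(0)=1$, this rearranges to $\frac{d}{d\lambda}\bigl(\lambda^{-1}\log H(\lambda)\bigr) \le |s|$ for $\lambda \ne 0$, and $\lambda^{-1}\log H(\lambda) \to H'(0) = \int F\,d\nu$ as $\lambda \to 0$; integrating away from $0$ gives the displayed bound for $\lambda > 0$, and the mirror-image computation (where the integrated inequality reverses upon multiplication by $\lambda < 0$) gives the same bound for $\lambda < 0$.

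The second step deduces \eqref{e:concentration}. Let $A, B \subseteq M$; we may assume $\nu(A),\nu(B)>0$ and $d \equiv \dist_{g(s)}(A,B) > 0$, since otherwise the right-hand side of \eqref{e:concentration} is $\ge 1$ and there is nothing to prove. Fix any $k \ge d$ and apply the first step to the bounded $1$-Lipschitz function $F_k \equiv \min\{\dist_{g(s)}(\cdot,A),\,k\}$; abbreviate $m_k \equiv \int F_k\,d\nu$. Because $F_k \equiv 0$ on $A$, for each $\lambda > 0$ we have $\nu(A) = \int_A e^{-\lambda F_k}\,d\nu \le H(-\lambda) \le \exp(-\lambda m_k + |s|\lambda^2)$. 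Because $F_k \ge \min\{d,k\} = d$ on $B$ (this is where $k \ge d$ enters), we have $\nu(B) \le \int_B e^{\lambda(F_k - d)}\,d\nu \le e^{-\lambda d} H(\lambda) \le \exp(-\lambda d + \lambda m_k + |s|\lambda^2)$. Multiplying, the terms $m_k$ cancel and $\nu(A)\nu(B) \le \exp(-\lambda d + 2|s|\lambda^2)$ for all $\lambda > 0$; the choice $\lambda = d/(4|s|)$ yields precisely $\nu(A)\nu(B) \le \exp(-d^2/(8|s|))$, which is \eqref{e:concentration}.

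I expect the only genuinely technical point to be the regularization needed to insert $\phi = e^{\lambda F_k/2}$ into \eqref{e:logsob}, which is stated for $\phi \in C^\infty_0(M)$ whereas this $\phi$ is merely bounded, Lipschitz, and not compactly supported. On a complete time slice of bounded geometry this is standard: replace $F_k$ by $\chi_R F_k$ with $\chi_R$ a cutoff equal to $1$ on $B_R(x_0)$ and $|\nabla\chi_R| = O(1/R)$, mollify, let $R \to \infty$, and pass to the limit by dominated convergence (every integrand is dominated by $e^{|\lambda|k}$ against the probability measure $\nu$). It is worth emphasizing that the improvement in the constant comes exactly from playing $\nu(A)$ against $\nu(B)$ rather than bounding one factor crudely by $1$; the resulting constant $1/(8|s|)$ is then optimal, as one checks on flat $\R^n$ with the Gaussian measure $d\nu = (4\pi|s|)^{-n/2}e^{-|x|^2/(4|s|)}\,dx$ — for which \eqref{e:logsob} is an equality — by taking $A = \{x_1 \le -d/2\}$, $B = \{x_1 \ge d/2\}$ and letting $d \to \infty$.
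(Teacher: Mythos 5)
Your proposal is correct and follows essentially the same route as the paper: the Herbst argument, deriving the exponential moment bound for $1$-Lipschitz functions from \eqref{e:logsob} via the differential inequality for $\lambda^{-1}\log\int e^{\lambda F}\,d\nu$, then applying it to the distance function of one of the sets and optimizing in $\lambda$. The only differences are cosmetic (you truncate $\dist(\cdot,A)$ and spell out the regularization, while the paper centers $\dist(\cdot,B)$ and uses the double integral over $A\times B$), so there is nothing substantive to add.
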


Let us observe the following natural consequence. Given $x_1,x_2\in M$ in the $s$-time slice, we can apply (\ref{e:concentration}) to $d\nu \equiv d\nu_{x_2}$, choosing $A,B$ to be the metric balls $B_r(x_1,s), B_r(x_2,s)$ with  $r^2 \equiv |s|$. If $B_r(x_1,s)$ is in addition noncollapsed with a uniform constant, which is of course a fair assumption after Perelman \cite{P}, then we obtain the following average Gaussian upper bound:
\begin{align}\label{e:weak_gaussian}
\fint_{B_r(x_1,s)} H_{x_2}(s)\,d\vol_{g(s)}\leq \frac{C|s|^{-\frac{n}{2}}}{\nu_{x_2}(B_r(x_2,s))} \exp\left(-\frac{1}{C|s|}d_{g(s)}(x_1,x_2)^2\right).
\end{align}
The primary concern with this estimate is a lack of effective lower bound on $\nu_{x_2}(B_r(x_2,s))$.
We can fix this to some extent by bringing in a pointwise Gaussian lower bound for $H_{x_2}$ from \cite{zhang2}, see Theorem \ref{t:hk_lower_bound}, however this forces us to work with time-$0$ balls, not time-$s$ balls.

\begin{corollary}\label{c:gaussian_integral_upper}
For each $C > 0$ there exists a $C' = C'(n,C) > 0$ such that the following holds. Let $(M^n,g(t))$ be any Ricci flow as in \eqref{e:RF} such that, for some $s \in [-T,0)$,
\begin{align}
\sup_{t \in [s,0]}\|R[g(t)]\|_\infty \leq \frac{C}{|s|},\;\,\inf_{\tau \in (0,2|s|)} \mu(g(s),\tau)\geq -C.
\end{align}
Let $x_1, x_2 \in M$ and put $r^2 \equiv |s|$. Then we have an average Gaussian upper bound
\begin{align}\label{e:uppergauss2}
\fint_{B_r(x_1,0)} H_{x_2}(s) \, d\vol_{g(s)}\leq C'|s|^{-\frac{n}{2}} \exp\left(-\frac{1}{C'|s|} {\rm dist}_{g(s)}(B_r(x_1,0), B_r(x_2,0))^2\right).
\end{align}
As a consequence, we get the following distance distortion type estimate:
\begin{align}\label{e:distance_distortion}
{\rm dist}_{g(s)}(B_r(x_1,0), B_r(x_2,0))&\leq C'd_{g(0)}(x_1,x_2).
\end{align}
\end{corollary}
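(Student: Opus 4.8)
The plan is to deduce this from the Gaussian concentration inequality of Theorem \ref{t:concentration}, together with Perelman's $\kappa$-noncollapsing and the pointwise Gaussian lower bound for the conjugate heat kernel of Theorem \ref{t:hk_lower_bound}. Throughout, $c, c_1, c_2, \dots$ denote positive constants depending only on $n$ and $C$, possibly changing from line to line. First I would apply Theorem \ref{t:concentration} to the measure $d\nu \equiv d\nu_{x_2}(s)$ with $A \equiv B_r(x_1,0)$ and $B \equiv B_r(x_2,0)$, which gives
\begin{align*}
\nu_{x_2}(B_r(x_1,0))\cdot\nu_{x_2}(B_r(x_2,0)) \leq \exp\Bigl(-\tfrac{1}{8|s|}\,{\rm dist}_{g(s)}(B_r(x_1,0),B_r(x_2,0))^2\Bigr).
\end{align*}
Since $\fint_{B_r(x_1,0)}H_{x_2}(s)\,d\vol_{g(s)} = \nu_{x_2}(B_r(x_1,0))/\Vol_{g(s)}(B_r(x_1,0))$, the bound \eqref{e:uppergauss2} will follow once we show $\Vol_{g(s)}(B_r(x_i,0)) \geq c\,|s|^{n/2}$ for $i = 1,2$ and $\nu_{x_2}(B_r(x_2,0)) \geq c$.

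For the volume estimate, note that by Perelman's monotonicity $\mu(g(0),\tau) \geq \mu(g(s),\tau+|s|) \geq -C$ for $\tau \in (0,|s|)$, while $\|R[g(0)]\|_\infty \leq C/|s| = Cr^{-2}$ by hypothesis; Perelman's $\kappa$-noncollapsing then yields $\Vol_{g(0)}(B_r(x_i,0)) \geq c\,r^n$ (applying noncollapsing at a slightly smaller scale if needed to absorb the constant $C$). Moreover, from $\partial_t \log\sqrt{\det g} = -R$ and $|R| \leq C/|s|$ on $[s,0]$ one gets the volume-form comparison $e^{-C}\,d\vol_{g(s)} \leq d\vol_{g(0)} \leq e^{C}\,d\vol_{g(s)}$, hence $\Vol_{g(s)}(B_r(x_i,0)) \geq e^{-C}\Vol_{g(0)}(B_r(x_i,0)) \geq c\,|s|^{n/2}$. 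Finally, Theorem \ref{t:hk_lower_bound} provides $H_{x_2}(y,s) \geq c_1|s|^{-\frac{n}{2}}\exp(-c_2 d_{g(0)}(x_2,y)^2/|s|)$, so $H_{x_2}(y,s) \geq c|s|^{-\frac{n}{2}}$ for $y \in B_r(x_2,0)$ and therefore $\nu_{x_2}(B_r(x_2,0)) \geq c|s|^{-\frac{n}{2}}\Vol_{g(s)}(B_r(x_2,0)) \geq c$. Inserting both bounds into the displayed inequality proves \eqref{e:uppergauss2}.

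For the distance distortion estimate \eqref{e:distance_distortion}, I would pair \eqref{e:uppergauss2} with the reverse estimate coming again from Theorem \ref{t:hk_lower_bound}: for $y \in B_r(x_1,0)$ we have $d_{g(0)}(x_2,y) \leq d_{g(0)}(x_1,x_2)+r$, so
\begin{align*}
\fint_{B_r(x_1,0)}H_{x_2}(s)\,d\vol_{g(s)} \geq c_1\,|s|^{-\frac{n}{2}}\exp\Bigl(-\tfrac{c_2}{|s|}(d_{g(0)}(x_1,x_2)+r)^2\Bigr).
\end{align*}
Comparing with \eqref{e:uppergauss2}, taking logarithms and using $r^2 = |s|$ yields ${\rm dist}_{g(s)}(B_r(x_1,0),B_r(x_2,0))^2 \leq c\,(d_{g(0)}(x_1,x_2)+r)^2$. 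If $d_{g(0)}(x_1,x_2) \leq r$ then the midpoint of a minimizing $g(0)$-geodesic from $x_1$ to $x_2$ lies in $B_r(x_1,0)\cap B_r(x_2,0)$, so the left-hand side vanishes; if $d_{g(0)}(x_1,x_2) > r$ then $d_{g(0)}(x_1,x_2)+r \leq 2d_{g(0)}(x_1,x_2)$. In either case \eqref{e:distance_distortion} follows after adjusting $C'$.

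The main obstacle is the mismatch of time slices: the concentration inequality and the average in \eqref{e:uppergauss2} are governed by the metric $g(s)$, while both Perelman noncollapsing and the pointwise heat-kernel lower bound are naturally expressed through the time-$0$ distance $d_{g(0)}$, and a priori a lower bound on $\nu_{x_2}$ of a ball around $x_2$ is only available via the latter. Bridging this gap is precisely where the two-sided scalar curvature hypothesis enters — once to carry the $\mu$-entropy bound forward to time $0$ via monotonicity so that noncollapsing applies there, and once, through $\partial_t\log\sqrt{\det g} = -R$, to compare the volume elements $d\vol_{g(s)}$ and $d\vol_{g(0)}$ — and it is the reason one is forced to work with the time-$0$ balls $B_r(x_i,0)$ throughout rather than the time-$s$ balls.
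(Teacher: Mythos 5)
Your proposal is correct and follows essentially the same route as the paper: Gaussian concentration applied to $d\nu_{x_2}(s)$ with the two time-$0$ balls, a lower bound on $\nu_{x_2}(B_r(x_2,0))$ from Zhang's pointwise Gaussian lower bound combined with no local collapsing and the volume-form comparison coming from the two-sided scalar curvature bound, and then the same Zhang lower bound on $B_r(x_1,0)$ played off against \eqref{e:uppergauss2} to get the distance distortion. Your extra details (pushing the $\mu$-bound to time $0$ by monotonicity rather than applying Theorem \ref{t:stronger_nlc} with initial slice $g(s)$, and the case distinction $d_{g(0)}(x_1,x_2)\lessgtr r$) are harmless refinements of the same argument.
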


\begin{remark}
Both (\ref{e:weak_gaussian}) and (\ref{e:uppergauss2}) fall short of what one might hope to have. For
example, there is a method of proving \emph{pointwise} Gaussian upper bounds for heat kernels on static manifolds relying on nothing more than a log-Sobolev
\cite{D}.
Unfortunately this approach seems to break down for the Ricci
flow for lack of control on the distance distortion between different time slices.
\end{remark}

\subsection{Lipschitz continuity of the pointed Nash entropy}\label{s:nash_cont}
We now return to the main flow of the argument and explain how the Poincar{\'e} inequality of Theorem \ref{t:ineqs}(1) helps us to complete the proof of Theorem \ref{t:eps_regularity}. As we said at the end of Section \ref{s:ptd_ent_eps_reg}, what we require is a Lipschitz
bound in $x$ for $\cW_x(s)$, or at least for a weaker quantity than $\cW_x(s)$ that still controls the soliton behavior of our Ricci flow near $x$. It turns out that we can work with the \emph{time average} of $\cW_x(s)$:

\begin{definition}\label{d:nash_entropy}
Given $x_0 \in M$ and $s \in [-T,0)$, we define the pointed Nash entropy by
\begin{align}\label{e:nashdef}
\cN_{x_0}(s) \equiv \frac{1}{|s|} \int_s^0 \cW_{x_0}(r)\,dr = \int_M f_{x_0}(s)\,d\nu_{x_0}(s) - \frac{n}{2}.
\end{align}
\end{definition}

See Proposition \ref{p:nash_entropy} for the equality in (\ref{e:nashdef}) and other basic properties. Thus, $\cN_{x_0}(s)$ is closely related to the quantity used by Nash \cite{nash} in proving H\"older continuity of weak solutions.

\begin{theorem}\label{t:ent_cont}
For each $C > 0$ there exists a $C' = C'(n,C) > 0$ such that the following holds. Let $(M^n,g(t))$ be any Ricci flow as in \eqref{e:RF} such that
\begin{align}\label{e:gen_assns_2}
R[g(s)]\geq -\frac{C}{|s|},\;\,\inf_{\tau \in (0,2|s|)} \mu(g(s),\tau)\geq -C,
\end{align}
for some $s \in [-T,0)$.  Then the mapping
\begin{align}
(M, g(0)) \ni x \mapsto f_x(s) H_x(s) \in L^1(M, d\vol_{g(s)})
\end{align}
is globally $C'|s|^{-\frac{1}{2}}$-Lipschitz. In particular this implies that
\begin{align}\label{e:nash_ent_cont}
|\cN_{x_1}(s) - \cN_{x_2}(s)| \leq C'|s|^{-\frac{1}{2}}d_{g(0)}(x_1,x_2).
\end{align}
\end{theorem}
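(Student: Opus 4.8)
The plan is to prove the $L^1$-Lipschitz bound directly; the entropy estimate \eqref{e:nash_ent_cont} then follows by integrating against the identity $\cN_x(s) = \int f_x(s)\,d\nu_x(s) - \frac n2$ and observing that the $-\frac n2$ cancels while the integral is exactly $\int (f_xH_x)(s)\,d\vol_{g(s)}$. Fix a unit-speed minimizing geodesic $\gamma\colon[0,L]\to(M,g(0))$ from $x_1$ to $x_2$ with $L=d_{g(0)}(x_1,x_2)$, and set $u(\sigma)\equiv f_{\gamma(\sigma)}(s)H_{\gamma(\sigma)}(s)$, viewed as a curve in $L^1(M,d\vol_{g(s)})$. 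It suffices to show $\|\partial_\sigma u(\sigma)\|_{L^1}\leq C'|s|^{-1/2}$ uniformly in $\sigma$, because then $\|u(L)-u(0)\|_{L^1}\leq\int_0^L\|\partial_\sigma u\|_{L^1}\,d\sigma\leq C'|s|^{-1/2}L$. So I would reduce everything to a one-parameter family: write $(x_\sigma,0)=(\gamma(\sigma),0)$, abbreviate $H_\sigma\equiv H_{x_\sigma}(\cdot,s)$, $f_\sigma\equiv f_{x_\sigma}(\cdot,s)=-\log H_\sigma-\frac n2\log(4\pi|s|)$, and $\dot H_\sigma\equiv\partial_\sigma H_\sigma$. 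Then $\partial_\sigma(f_\sigma H_\sigma)=\dot f_\sigma H_\sigma+f_\sigma\dot H_\sigma=-\dot H_\sigma+f_\sigma\dot H_\sigma=(f_\sigma-1)\dot H_\sigma$, using $\dot f_\sigma=-\dot H_\sigma/H_\sigma$. Since $\int\dot H_\sigma\,d\vol_{g(s)}=\partial_\sigma\!\int H_\sigma=0$, the constant $-1$ is harmless and we must bound $\int|f_\sigma\dot H_\sigma|\,d\vol_{g(s)}=\int\bigl|f_\sigma - \textstyle\int f_\sigma\,d\nu_\sigma\bigr|\,|\dot H_\sigma|\,d\vol_{g(s)}$, i.e. we may recenter $f_\sigma$ by its $\nu_\sigma$-average for free.

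The heart of the matter is to control $\dot H_\sigma$. The key observation is that $v_\sigma\equiv\partial_\sigma H_\sigma$ solves the conjugate heat equation (in the backward time variable $s$) with a delta-derivative initial condition concentrated at $(x_\sigma,0)$; equivalently, along the \emph{forward} heat flow the map $x_0\mapsto H(x_0,0\,|\,y,s)$ is itself a solution, so differentiating in $x_0$ along $\gamma$ produces a solution of the (forward) heat equation in the $(x_0,0)$ variable with derivative-of-delta initial data. The clean way to package this: write $\dot H_\sigma = \mathrm{div}_{g(s)}(H_\sigma\,\mathbf{w}_\sigma)$ for a suitable vector field $\mathbf{w}_\sigma$, coming from differentiating the kernel's spatial dependence — heuristically $\mathbf{w}_\sigma$ plays the role of ``$\nabla$ in the $x_0$-slot evaluated along $\gamma$.'' Granting such a representation, Cauchy--Schwarz against $d\nu_\sigma=H_\sigma\,d\vol_{g(s)}$ gives
\begin{align}\label{e:proposal_cs}
\int|f_\sigma\dot H_\sigma|\,d\vol_{g(s)}
= \int\Bigl|\bigl(f_\sigma-\textstyle\int f_\sigma\,d\nu_\sigma\bigr)\,\mathrm{div}_{g(s)}(H_\sigma\mathbf{w}_\sigma)\Bigr|\,d\vol_{g(s)}
\leq \Bigl(\int|\nabla f_\sigma|^2\,d\nu_\sigma\Bigr)^{1/2}\Bigl(\int|\mathbf{w}_\sigma|^2\,d\nu_\sigma\Bigr)^{1/2},
\end{align}
after integrating by parts to move $\mathrm{div}$ onto $f_\sigma$; note the recentering of $f_\sigma$ is exactly what lets us avoid a $\|f_\sigma\|_{L^2(\nu_\sigma)}$ term and keep only the gradient. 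For the first factor, Perelman's pointed-entropy machinery bounds $|s|\int|\nabla f_\sigma|^2\,d\nu_\sigma$ in terms of $\cW_{x_\sigma}(s)$ and the scalar curvature lower bound — concretely, $\int(|s|(|\nabla f_\sigma|^2+R)+f_\sigma-n)\,d\nu_\sigma=\cW_{x_\sigma}(s)$, and combining the upper bound $\cW_{x_\sigma}(s)\leq 0$ with $R\geq -C/|s|$ and a lower bound on $\int f_\sigma\,d\nu_\sigma$ (which is $\cN_{x_\sigma}(s)+\frac n2\geq$ something controlled by $\mu\geq -C$) yields $\int|\nabla f_\sigma|^2\,d\nu_\sigma\leq C_1/|s|^2$. (Alternatively one invokes the Poincaré inequality \eqref{e:poinc} of Theorem~\ref{t:ineqs}(1) directly on $\phi=f_\sigma-\int f_\sigma\,d\nu_\sigma$, trading the gradient bound for an $L^2$-bound on $f_\sigma$; but the entropy route is cleaner.) The second factor, $\int|\mathbf{w}_\sigma|^2\,d\nu_\sigma$, should be a \emph{universal} constant — morally $|s|^{-1}$ times a dimensional constant, by the Gaussian heuristic that $\mathbf{w}_\sigma\sim\nabla\log H\sim \nabla f$ has $\nu_\sigma$-variance of order $n/|s|$; establishing this requires the monotonicity/bound on the ``variance of $f$,'' which is again Poincaré or a standard conjugate-heat-kernel estimate. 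Putting the two factors together gives $\int|f_\sigma\dot H_\sigma|\,d\vol_{g(s)}\leq C'|s|^{-1/2}$, as needed.

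The main obstacle is making the representation $\dot H_\sigma=\mathrm{div}_{g(s)}(H_\sigma\mathbf{w}_\sigma)$ rigorous and getting the \emph{universal} (dimension-only) bound on $\int|\mathbf{w}_\sigma|^2\,d\nu_\sigma$. One rigorous route avoids differentiating the kernel in the base point altogether: instead of $\partial_\sigma H_{x_\sigma}$, estimate the finite difference $\|f_{x_1}(s)H_{x_1}(s)-f_{x_2}(s)H_{x_2}(s)\|_{L^1}$ by a coupling/transport argument — run the two conjugate heat flows backward from $(x_1,0)$ and $(x_2,0)$, use the semigroup property at some intermediate time, and exploit that at time $0^-$ the two kernels are concentrated at points a distance $\leq L$ apart, so a Gaussian off-diagonal estimate (the lower bound from \cite{zhang2} together with the concentration inequality of Theorem~\ref{t:concentration}) controls the total-variation-type discrepancy, while the $f$-weight is absorbed using the $L^2(\nu)$-bounds on $f$ from above. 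Either way, the only genuinely new analytic input beyond Perelman's formulas is the Poincaré inequality \eqref{e:poinc}, which is precisely what supplies the \emph{effective, $T$-independent} constant in the second Cauchy--Schwarz factor; without it one would be forced back onto the $T$-dependent log-Sobolev constant from Perelman's monotonicity and lose the universality. I would therefore organize the write-up as: (i) the differentiation identity $\partial_\sigma(f_\sigma H_\sigma)=(f_\sigma-1)\dot H_\sigma$ and reduction to \eqref{e:proposal_cs}; (ii) the divergence representation of $\dot H_\sigma$; (iii) the entropy/Poincaré bound on $\int|\nabla f_\sigma|^2\,d\nu_\sigma$; (iv) the universal bound on $\int|\mathbf{w}_\sigma|^2\,d\nu_\sigma$; (v) assembly and passage from the $L^1$-bound to \eqref{e:nash_ent_cont}.
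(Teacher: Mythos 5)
Your starting point coincides with the paper's: differentiate $f_xH_x$ in the base point along a geodesic, note $\partial_\sigma(f_\sigma H_\sigma)=(f_\sigma-1)\dot H_\sigma$, and try to close with Cauchy--Schwarz against $d\nu_\sigma$ plus the Poincar\'e inequality. But the central display \eqref{e:proposal_cs} is not valid, and the one ingredient that would repair it is never identified. You integrate by parts \emph{inside an absolute value}: $\int|(f_\sigma-\bar f_\sigma)\,{\rm div}(H_\sigma\mathbf w_\sigma)|\,d\vol$ is not controlled by $\bigl(\int|\nabla f_\sigma|^2d\nu\bigr)^{1/2}\bigl(\int|\mathbf w_\sigma|^2d\nu\bigr)^{1/2}$, because the $L^1$-norm of a product sees no cancellation; for the same reason the claims that ``$\int\dot H_\sigma=0$ makes the $-1$ harmless'' and that $f_\sigma$ may be recentered by its $\nu_\sigma$-average under the absolute value are both false. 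Moreover the representation $\dot H_\sigma={\rm div}_{g(s)}(H_\sigma\mathbf w_\sigma)$ with a \emph{universally} $L^2(\nu_\sigma)$-bounded $\mathbf w_\sigma$ is precisely the hard analytic content, which you acknowledge is unproved; the Gaussian heuristic does not supply it, and the fallback coupling sketch is too vague to produce a genuine Lipschitz (rather than additive) bound.

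The correct route, which is the paper's, needs no integration by parts at all. Since $\dot H_\sigma=-\dot f_\sigma H_\sigma$ with $\dot f_\sigma=\langle\nabla_xf_x,\dot\gamma\rangle$, one has $\int|(f_\sigma-1)\dot H_\sigma|\,d\vol_{g(s)}=\int|1-f_\sigma||\dot f_\sigma|\,d\nu_\sigma\leq\|\nabla_xf_x\|_{L^2(\nu)}\,(1+\|f_x\|_{L^2(\nu)})$ by Cauchy--Schwarz with respect to $d\nu$. The second factor is bounded by a constant exactly as you indicate: the Poincar\'e inequality \eqref{e:poinc} applied to $f_x$, together with $\int|\nabla_yf_x|^2\,d\nu\leq(\tfrac n2+C)|s|^{-1}$ (from $\cW_x(s)\leq\cN_x(s)\leq0$ and $R\geq-C/|s|$) and $\int f_x\,d\nu\in[\tfrac n2-C,\tfrac n2]$. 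What is missing from your argument is the first factor: a bound $\|\nabla_xf_x\|_{L^2(\nu)}\leq C'|s|^{-1/2}$ on the gradient in the \emph{base-point} slot. This is supplied by the pointwise Hamilton--Zhang Harnack estimate (Corollary \ref{c:hk_grad_est}): $H(x,t\,|\,y,s)$ solves the forward heat equation in $(x,t)$ and is bounded above by $C'|s|^{-n/2}$ via Theorem \ref{t:upper_rf_heat_kernel}, whence $|\nabla_xf_x|^2\leq\frac{C'}{|s|}(C'+f_x)$ pointwise; integrating against $d\nu$ and using $\int f_x\,d\nu\leq\frac n2$ closes the estimate. Without this input your second Cauchy--Schwarz factor has no proof, so the argument as written does not go through.
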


The idea here, and the reason for introducing $\cN_x$ in the first place, is that $\cN_x$, unlike $\cW_x$, does not depend on derivatives of $H$. Thus, $\nabla_x\cN_x$ can be bounded by the gradient estimates of \cite{zhang_gradient, zhang1}.  The Poincar{\'e} (\ref{e:poinc}) comes in as a crucial tool to control the $L^2$-norm of $f_{x}(s)$ in the process.

Finally, with Theorem \ref{t:ent_cont} in hand it is not difficult to complete the proof of Theorem \ref{t:eps_regularity} (or in fact of a slightly strengthened version in which $\cW_{x_0}$ gets replaced by $\cN_{x_0}$) because the contradiction argument outlined in Section \ref{s:ptd_ent_eps_reg} works the same way with $\cN_{x_0}$ in place of $\cW_{x_0}$.

\section{Background material}\label{s:basics}

\subsection{The heat operator and its conjugate}

Let $(M^n, g(t))$ be a Ricci flow as in (\ref{e:RF}).

\begin{definition}
The heat operator and its conjugate along the flow are defined by
\begin{align}
\Box &\equiv \partial_t - \Delta,\\
\Box^* &\equiv -\partial_t - \Delta + R.
\end{align}
\end{definition}

This is a sensible definition because of the following identity.

\begin{lemma}\label{l:ibp}
Let $\Omega$ be a smooth bounded domain in $M$ and $[t_1,t_2] \subseteq [-T,0]$. Then
\begin{align}\label{e:ibp}
\int_{t_1}^{t_2} \int_{\Omega} (\Box u)v - (\Box^*v)u = \int_{\Omega} uv\; \biggr|_{t_1}^{t_2} - \int_{t_1}^{t_2} \int_{\partial\Omega} \left(\frac{\partial u}{\partial n}v - \frac{\partial v}{\partial n}u\right)
\end{align}
for all smooth functions $u, v: M \times [t_1,t_2] \to \R$.
\end{lemma}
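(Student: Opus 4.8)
The final statement is Lemma~\ref{l:ibp}, the integration-by-parts identity for $\Box$ and $\Box^*$ along the Ricci flow.

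\medskip

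\emph{Proof proposal.} The plan is to compute directly, integrating by parts in both the space and the time variables and keeping careful track of the boundary terms, and then to invoke the first variation of volume along the Ricci flow to produce the scalar curvature term in $\Box^*$. First I would expand the integrand pointwise: $(\Box u)v - (\Box^*v)u = (\partial_t u)v - (\Delta u)v - \left(-\partial_t v - \Delta v + Rv\right)u = \partial_t(uv) - \left((\Delta u)v - (\Delta v)u\right) - Ruv$. The middle term is handled by Green's second identity on the fixed-time slice $(M,g(t))$: for each $t$,
\begin{align}
\int_\Omega \left((\Delta u)v - (\Delta v)u\right)\,d\vol_{g(t)} = \int_{\partial\Omega}\left(\frac{\partial u}{\partial n}v - \frac{\partial v}{\partial n}u\right)\,d\sigma_{g(t)},
\end{align}
so that integrating this over $t\in[t_1,t_2]$ yields exactly the boundary double-layer term on the right-hand side of \eqref{e:ibp} (with the correct sign once it is moved to the other side).

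\medskip

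The only genuinely flow-specific point is the time integral of $\partial_t(uv) - Ruv$ over the moving domain. Here I would use the Ricci-flow volume evolution $\partial_t\left(d\vol_{g(t)}\right) = -R\,d\vol_{g(t)}$, which gives
\begin{align}
\frac{d}{dt}\int_\Omega uv\,d\vol_{g(t)} = \int_\Omega \left(\partial_t(uv) - Ruv\right)\,d\vol_{g(t)}.
\end{align}
Integrating this in $t$ from $t_1$ to $t_2$ produces precisely $\int_\Omega uv\big|_{t_1}^{t_2}$, the first term on the right of \eqref{e:ibp}. Assembling the three pieces — the $\partial_t(uv)-Ruv$ part becoming the time-boundary term, the Laplacian part becoming the spatial-boundary term, and noting there are no other contributions — gives the identity. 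Since $\Omega$ is a fixed (time-independent) bounded domain with smooth boundary and $u,v$ are smooth up to the boundary on the compact time interval, all the integrals converge and differentiation under the integral sign is justified, so there are no analytic subtleties.

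\medskip

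I do not expect a serious obstacle here; the lemma is essentially a bookkeeping exercise, and the one thing to be careful about is sign conventions, namely that the outward normal derivative terms and the $\pm$ in front of the time-boundary term come out matching the stated formula. If one wanted to avoid invoking $\partial_t(d\vol) = -R\,d\vol$ explicitly, an alternative is to note that $\Box^*$ is, by design, the formal adjoint of $\Box$ with respect to the spacetime measure $d\vol_{g(t)}\,dt$, and \eqref{e:ibp} is just the statement of that adjointness with all boundary terms retained; but carrying out the direct computation above is the cleanest route and makes the boundary terms transparent.
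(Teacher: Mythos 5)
Your proof is correct: the pointwise expansion $(\Box u)v - (\Box^*v)u = \partial_t(uv) - \bigl((\Delta u)v - (\Delta v)u\bigr) - Ruv$, Green's second identity on each time slice, and the volume evolution $\partial_t\,d\vol_{g(t)} = -R\,d\vol_{g(t)}$ assemble exactly into \eqref{e:ibp}. The paper states this lemma without proof as a routine computation, and your argument is precisely the standard one it implicitly relies on.
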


\begin{definition}\label{d:conjugate_hk}
For $x,y \in M$ and $s < t$ in $[-T,0]$, we let $H(x,t\,|\,y,s)$ denote the conjugate heat kernel based at $(x,t)$, i.e.~the unique minimal positive solution to the equations
\begin{eqnarray}
\Box_{y,s}^*H(x,t\,|\,y,s) = (-\partial_s - \Delta_{y,\,g(s)} + R(y,s) )H(x,t\,|\,y,s) = 0,\\
\lim_{s \to t} H(x,t\,|\,y,s) = \delta_{x}(y).\end{eqnarray}
\end{definition}

If we wish to fix a point $x\in M$ we may write $H_x(y,s)\equiv H(x,0\,|\,y,s)\equiv (4\pi|s|)^{-\frac{n}{2}}e^{-f_x(y,s)}$.

\begin{lemma}\label{l:hk_properties}
The conjugate heat kernel satisfies the following properties.
\begin{enumerate}
\item $\int H(x,t\,|\,y,s)\,d\vol_{g(s)}(y) =1$.
\item $H(x,t\,|\,y,s)$ is also the fundamental solution of $\Box_{x,t} = \partial_t - \Delta_{x,\,g(t)}$ with pole at $(y,s)$.
\item $\int H(x,t\,|\,y,s) \, d\vol_{g(t)}(x) \leq \exp(\rho(t-s))$, where $\rho \equiv \|R[g(-T)]^-\|_\infty$.
\end{enumerate}
\end{lemma}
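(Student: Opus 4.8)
The plan is to handle the three items in turn, the common ingredients being the evolution of the volume element along the flow, $\partial_s\,d\vol_{g(s)} = -R(\cdot,s)\,d\vol_{g(s)}$ (from $\partial_s g_{ij} = -2R_{ij}$), the defining equations in Definition \ref{d:conjugate_hk}, and the integration-by-parts identity of Lemma \ref{l:ibp}. In the noncompact case the completeness and bounded-geometry hypotheses of Section \ref{s:intro} ensure that the relevant minimal positive kernels exist and carry enough off-diagonal Gaussian decay to justify the integrations by parts below; if one prefers, one may simply assume $M$ compact, where everything is classical.

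For (1), fix $(x,t)$ and set $m(s) \equiv \int_M H(x,t\,|\,y,s)\,d\vol_{g(s)}(y)$ for $s<t$. Differentiating under the integral sign and combining $\partial_s\,d\vol_{g(s)} = -R\,d\vol_{g(s)}$ with the conjugate heat equation $-\partial_s H - \Delta_y H + RH = 0$, the two $R$-terms cancel and one is left with $m'(s) = -\int_M \Delta_y H\,d\vol_{g(s)}(y) = 0$. Since $H(x,t\,|\,\cdot,s) \to \delta_x$ as $s\uparrow t$ we have $m(s)\to 1$, so the constant $m$ equals $1$.

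For (2), let $G(y,s\,;\,x,t)$ be the fundamental solution of the forward operator $\Box_{x,t} = \partial_t - \Delta_{x,g(t)}$ with pole at $(y,s)$, i.e.\ $\Box_{x,t} G = 0$ and $G\to\delta_y$ as $t\downarrow s$; the claim is that $G = H$. Fixing $(x_0,t_0)$ and $(y_0,s_0)$ with $s_0 < t_0$, apply Lemma \ref{l:ibp} to $u(\cdot,\tau) = G(y_0,s_0\,;\,\cdot,\tau)$ and $v(\cdot,\tau) = H(x_0,t_0\,|\,\cdot,\tau)$ on $\Omega\times[s_0+\epsilon,t_0-\epsilon]$ and let $\Omega\nearrow M$ and $\epsilon\to 0$. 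The left side vanishes because $\Box u = \Box^* v = 0$; the spatial boundary term drops out in the limit $\Omega\nearrow M$; and the term $\int_M uv\,\bigr|_{s_0}^{t_0}$ converges to $G(y_0,s_0\,;\,x_0,t_0) - H(x_0,t_0\,|\,y_0,s_0)$ using the two delta initial conditions. Hence $G = H$. (That the minimal positive conjugate solution is the one matched to the minimal positive forward solution is where the decay/uniqueness input is used.)

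For (3), invoke (2): as a function of $(x,t)$, $H(x,t\,|\,y,s)$ is a positive solution of $\partial_t H = \Delta_{x,g(t)} H$ with $H\to\delta_y$ as $t\downarrow s$. Put $n(t) \equiv \int_M H(x,t\,|\,y,s)\,d\vol_{g(t)}(x)$; arguing as in (1) but now with only $\partial_t\,d\vol_{g(t)} = -R\,d\vol_{g(t)}$ and $\int_M\Delta_x H\,d\vol_{g(t)} = 0$ we get $n'(t) = -\int_M R(x,t)\,H\,d\vol_{g(t)}(x)$. The minimum principle for the scalar curvature along Ricci flow --- $\partial_t R = \Delta R + 2|\Ric|^2$ makes $\min_M R[g(t)]$ nondecreasing --- yields $R(\cdot,t) \geq \min_M R[g(-T)] \geq -\rho$, hence $n'(t) \leq \rho\,n(t)$, i.e.\ $e^{-\rho t}n(t)$ is nonincreasing; since $n(t)\to 1$ as $t\downarrow s$, we conclude $n(t) \leq e^{\rho(t-s)}$. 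The only delicate point in the whole lemma is the duality step (2): items (1) and (3) are routine once one grants the standard Ricci-flow facts about the evolution of $d\vol$ and the monotonicity of $R_{\min}$, whereas (2) requires the parabolic estimates near the poles and at spatial infinity that legitimize the limits in the application of Lemma \ref{l:ibp}.
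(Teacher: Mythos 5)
Your proposal is correct and follows essentially the same route as the paper: mass conservation for (1), the duality argument via Lemma \ref{l:ibp} applied to the forward fundamental solution and the conjugate heat kernel for (2), and differentiating $\int H\,d\vol_{g(t)}(x)$ together with the monotonicity of $R_{\min}$ for (3). The only cosmetic difference is in (2), where you cut off only in time near the two poles and use the delta initial conditions directly, while the paper excises small spatial balls around the poles as well; both handle the same asymptotics.
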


\begin{proof} (1) is simply the mass conserving property of the conjugate heat equation.

(2) requires some more care, and indeed reflects a very general fact concerning the fundamental solutions of parabolic operators and their formal adjoints. Let us write
\begin{align}
u &\equiv H_{\Box}(\,-,-\,|\,y,s),\\
v &\equiv H_{\Box^*}(x,t\,|\,-,-),
\end{align}
for the fundamental solutions of $\Box$ and $\Box^*$ with poles at $(y,s)$ and $(x,t)$ respectively. We then apply (\ref{e:ibp}) to $u$ and $v$ on the following three domains in the limit as $\epsilon \to 0$:
$$
[s,s+\epsilon^2] \times (M \setminus B_{\epsilon}(y,s)), \;\; [s+\epsilon^2, t-\epsilon^2] \times M,\;\; [t-\epsilon^2,t] \times (M \setminus B_{\epsilon}(x,t)).
$$
Using standard local asymptotics for $u$ and $v$ at their poles, this yields $u(x,t) = v(y,s)$.

(3) follows by differentiating $\int H(x,t\,|\,y,s) \, d\vol_{g(t)}(x)$ by $t$, substituting $\Box_{x,t}H = 0$ from (2), and using that the minimum of the scalar curvature is nondecreasing along the Ricci flow.\end{proof}

Finally, we recall a key tool that has already been put to good use in \cite{McCannTopping, zhang_gradient}.

\begin{lemma}\label{l:rf_bochner}
The following parabolic Bochner formula holds for all spacetime functions $u$:
\begin{align}\label{e:rf_bochner}
\Box \frac{1}{2}|\nabla u|^2 = -|\nabla^2 u|^2 + \langle\nabla u, \nabla\Box u\rangle.
\end{align}
\end{lemma}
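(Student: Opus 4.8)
The plan is to reduce the identity to two elementary ingredients: the evolution of the inverse metric along the Ricci flow, and the classical (static) Bochner--Weitzenb\"ock formula applied slicewise at each fixed time. Throughout I adopt the convention $\partial_t g_{ij} = -2R_{ij}$, so that $\partial_t g^{ij} = 2R^{ij}$, and I keep in mind that $\nabla$, $\nabla^2$ and $\langle\,\cdot\,,\,\cdot\,\rangle$ always refer to the metric $g(t)$ of the slice under consideration.

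First I would split $\Box\tfrac12|\nabla u|^2 = \partial_t\tfrac12|\nabla u|^2 - \Delta\tfrac12|\nabla u|^2$ and compute the two terms separately. Writing $|\nabla u|^2 = g^{ij}\partial_i u\,\partial_j u$ and differentiating in $t$, only the inverse metric and $u$ itself carry time dependence (and the coordinate derivatives $\partial_i u$ commute with $\partial_t$), so
\begin{align*}
\partial_t\tfrac12|\nabla u|^2 = \tfrac12\bigl(\partial_t g^{ij}\bigr)\partial_i u\,\partial_j u + g^{ij}\partial_i(\partial_t u)\,\partial_j u = \Ric(\nabla u,\nabla u) + \langle\nabla u,\nabla\partial_t u\rangle.
\end{align*}
For the spatial term, the usual Bochner formula on the fixed Riemannian manifold $(M,g(t))$ gives
\begin{align*}
\Delta\tfrac12|\nabla u|^2 = |\nabla^2 u|^2 + \langle\nabla u,\nabla\Delta u\rangle + \Ric(\nabla u,\nabla u).
\end{align*}

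Subtracting, the two occurrences of $\Ric(\nabla u,\nabla u)$ cancel exactly, leaving
\begin{align*}
\Box\tfrac12|\nabla u|^2 = -|\nabla^2 u|^2 + \langle\nabla u,\nabla(\partial_t u - \Delta u)\rangle = -|\nabla^2 u|^2 + \langle\nabla u,\nabla\Box u\rangle,
\end{align*}
which is \eqref{e:rf_bochner}. There is no genuine obstacle in this computation; the one point worth stressing is that the positive Ricci term produced by the Ricci-flow variation of $g^{-1}$ is \emph{precisely} what cancels the negative Ricci term in the Weitzenb\"ock identity, so that the curvature drops out entirely. The whole content therefore lies in getting the sign conventions right and in being careful that all spatial operators are those of the evolving metric when one differentiates in $t$.
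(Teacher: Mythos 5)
Your proof is correct and is essentially identical to the paper's: the time derivative of $|\nabla u|^2$ along the Ricci flow produces a $+\Ric(\nabla u,\nabla u)$ term that cancels the Ricci term in the static Bochner formula, and the rest is bookkeeping. Nothing to add.
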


\begin{proof}
Using $\frac{\partial}{\partial t}|\nabla u|^2 = 2\langle \nabla \frac{\partial}{\partial t} u, \nabla u\rangle + 2\Ric(\nabla u, \nabla u)$, this reduces to the usual computation. The Ricci term here cancels with the Ricci term from the standard elliptic Bochner formula.
\end{proof}

\subsection{Properties of the entropy functionals} We reviewed the definitions of Perelman's entropy functionals $\cW(g,f,\tau)$ and $\mu(g,\tau)$ in Section \ref{s:intro}. In addition we introduced localized versions $\cW_{x_0}(s)$ and $\cN_{x_0}(s)$. In this section we collect basic properties and applications of these functionals.

We begin with Perelman's foundational monotonicity formula and two corollaries \cite{KL,P}.

\begin{theorem}\label{t:pfirstvar}
Fix a smooth probability measure $dv$ on $M$ and let $f(t)$, $g(t)$ be families of functions and metrics on $M$ parametrized by $t \in [-T,0]$. Fix any $t_0 \in \R$ and put $\tau(t) \equiv t_0 - t$. If
\begin{equation}\label{e:gradient_flow}
\frac{\partial g}{\partial t} = -2(\Ric + \nabla^2 f), \;\; (4\pi\tau)^{-\frac{n}{2}} e^{-f} d\vol = dv,
\end{equation}
for all $t < t_0$, then we have
\begin{equation}\label{e:perelmono}
\frac{d}{dt}\mathcal{W}(g,f,\tau) = 2\tau \int \left|\Ric + \nabla^2 f - \frac{g}{2\tau}\right|^2 dv.
\end{equation}
\end{theorem}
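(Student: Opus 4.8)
The plan is to exploit that the measure $dv$ is held fixed in time, so that $\cW(g,f,\tau)=\int[\tau(|\nabla f|^2+R)+f-n]\,dv$ and hence $\frac{d}{dt}\cW=\int\partial_t[\tau(|\nabla f|^2+R)+f-n]\,dv$, with no term coming from differentiating the measure. First I would pin down the evolution of $f$: differentiating the constraint $(4\pi\tau)^{-\frac{n}{2}}e^{-f}\,d\vol=dv$ and using $\partial_t(d\vol)=-(R+\Delta f)\,d\vol$ along the flow $\partial_t g=-2(\Ric+\nabla^2 f)$ forces $\partial_t f=-\Delta f-R+\tfrac{n}{2\tau}$. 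Feeding this in, together with $\dot\tau=-1$ and the elementary identity $\int\Delta f\,dv=\int|\nabla f|^2\,dv$, reduces the problem to
\[
\frac{d}{dt}\cW=\int\Big(-2|\nabla f|^2-2R+\tfrac{n}{2\tau}+\tau\,\partial_t\big(|\nabla f|^2+R\big)\Big)\,dv,
\]
so that everything comes down to the parabolic evolution of $R$ and of $|\nabla f|^2$ along the modified flow.

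Next I would compute those two evolutions and integrate them against $dv$. The standard variation formula for scalar curvature together with the contracted second Bianchi identity $\div\Ric=\tfrac{1}{2}\nabla R$ yields $\partial_t R=\Delta_f R+2|\Ric|^2$, where $\Delta_f\equiv\Delta-\langle\nabla f,\nabla\cdot\rangle$. Similarly, from $\partial_t g^{ij}=2(\Ric^{ij}+\nabla^i\nabla^j f)$, $\nabla(\partial_t f)=-\nabla\Delta f-\nabla R$, and the Bochner formula $\Delta|\nabla f|^2=2|\nabla^2 f|^2+2\langle\nabla\Delta f,\nabla f\rangle+2\Ric(\nabla f,\nabla f)$, I get $\partial_t|\nabla f|^2=-\Delta_f|\nabla f|^2+4\Ric(\nabla f,\nabla f)+2|\nabla^2 f|^2-2\langle\nabla R,\nabla f\rangle$. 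Integrating against $dv$, the two $\Delta_f$-terms drop out because $\Delta_f$ is $dv$-symmetric, and one more integration by parts (using Bianchi again) converts $\int\Ric(\nabla f,\nabla f)\,dv$ into $\tfrac{1}{2}\int\langle\nabla R,\nabla f\rangle\,dv+\int\langle\Ric,\nabla^2 f\rangle\,dv$, which precisely cancels the leftover $\langle\nabla R,\nabla f\rangle$ term. What survives is $\int(-2|\nabla f|^2-2R+\tfrac{n}{2\tau})\,dv+2\tau\int(|\Ric|^2+2\langle\Ric,\nabla^2 f\rangle+|\nabla^2 f|^2)\,dv$, and using $\langle\Ric,g\rangle=R$, $\langle\nabla^2 f,g\rangle=\Delta f$, $|g|^2=n$ and $\int\Delta f\,dv=\int|\nabla f|^2\,dv$ once more, this is exactly the expansion of $2\tau\int|\Ric+\nabla^2 f-\tfrac{g}{2\tau}|^2\,dv$, which is the claim.

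A conceptually cleaner alternative, which I would at least record, is to first strip off the $\nabla^2 f$ in the flow by a DeTurck-type change of variables: pulling $(g,f)$ back by the time-dependent diffeomorphisms that trivialize the gradient term produces a solution $(\bar g,\bar f)$ of honest Ricci flow $\partial_t\bar g=-2\Ric(\bar g)$ for which $\bar u\equiv(4\pi\tau)^{-\frac{n}{2}}e^{-\bar f}$ solves the conjugate heat equation $\Box^*\bar u=0$, while $\cW$, the probability measure $dv$, and the pointwise quantity $|\Ric+\nabla^2 f-\tfrac{g}{2\tau}|$ are all diffeomorphism-invariant. It then suffices to establish Perelman's pointwise identity $\Box^*\big([\tau(2\Delta\bar f-|\nabla\bar f|^2+R[\bar g])+\bar f-n]\,\bar u\big)=-2\tau\,|\Ric+\nabla^2\bar f-\tfrac{\bar g}{2\tau}|^2\,\bar u$ and to integrate it over $M$ by Lemma \ref{l:ibp}. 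Either way, the real content is the algebraic identity by which the Bochner and Bianchi cross-terms conspire to complete the square; the only analytic point is that no boundary terms arise in the integrations by parts, which is automatic when $M$ is compact and otherwise follows from the assumed bounded geometry together with the Gaussian decay of $e^{-f}$. I expect the main obstacle to be the bookkeeping in the second step — getting the modified-flow evolution of $R$ right and tracking every cross-term through the integrations by parts until it collapses into the perfect square.
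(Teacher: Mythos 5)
Your computation is correct. The paper does not actually prove Theorem \ref{t:pfirstvar} --- it is stated as Perelman's monotonicity formula with a citation to \cite{KL,P} --- and your argument (fixed measure $dv$, the forced evolution $\partial_t f=-\Delta f-R+\tfrac{n}{2\tau}$, the equations $\partial_t R=\Delta_f R+2|\Ric|^2$ and $\partial_t|\nabla f|^2=-\Delta_f|\nabla f|^2+4\Ric(\nabla f,\nabla f)+2|\nabla^2 f|^2-2\langle\nabla R,\nabla f\rangle$, and the Bianchi/Bochner cancellation completing the square) is precisely the standard derivation given in those references, with your DeTurck-type alternative being Perelman's original pointwise identity.
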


Up to correcting by the diffeomorphisms generated by $\nabla f$, (\ref{e:gradient_flow}) is equivalent to $g$ evolving by Ricci flow and $(4\pi\tau)^{-\frac{n}{2}} e^{-f} $ evolving by the conjugate heat equation associated with this Ricci flow. For the rest of the present section, $(M^n, g(t))$ will denote a Ricci flow as in (\ref{e:RF}).

\begin{corollary}
For all $t_0 \in \R$, the quantity $\mu(g(t), t_0-t)$ is nondecreasing in $t < t_0$. The quantity is constant if and only if the flow is isometric to a gradient shrinking soliton with singular time $t_0$ and soliton function $f(t)$,
where $f(t)$ denotes any minimizer in the definition of $\mu(g(t), t_0-t)$.
\end{corollary}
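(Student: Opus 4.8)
The plan is to derive both statements from Perelman's first variation formula, Theorem~\ref{t:pfirstvar}, together with the standard fact (valid for compact $M$, which we are permitted to assume here) that the infimum defining $\mu(g,\tau)$ is attained by a smooth minimizer solving the associated Euler--Lagrange equation.

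For the monotonicity, fix $t_1 < t_2 < t_0$ and set $\tau_i \equiv t_0 - t_i$. Choose a minimizer $f_2$ for $\mu(g(t_2),\tau_2)$, so that $(4\pi\tau_2)^{-n/2}e^{-f_2}\,d\vol_{g(t_2)}$ is a probability measure, and evolve this measure backward in $t$ by the conjugate heat equation on $[t_1,t_2]$. Since the conjugate heat equation is mass preserving, this produces a family $f(t)$, $t\in[t_1,t_2]$, with $(4\pi\tau(t))^{-n/2}e^{-f(t)}$ a solution and $f(t_2)=f_2$. After correcting $g(t)$ and $f(t)$ by the time-dependent diffeomorphisms generated by $\nabla f$, the resulting pair satisfies \eqref{e:gradient_flow} relative to a fixed probability measure $dv$, so Theorem~\ref{t:pfirstvar} gives that $t\mapsto\cW(g(t),f(t),\tau(t))$ is nondecreasing; this quantity is unaffected by the diffeomorphism correction. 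Hence
\[
\mu(g(t_1),\tau_1)\;\le\;\cW(g(t_1),f(t_1),\tau_1)\;\le\;\cW(g(t_2),f(t_2),\tau_2)\;=\;\mu(g(t_2),\tau_2),
\]
where the first inequality is the definition of $\mu$ as an infimum and the final equality holds because $f_2$ was chosen to be a minimizer.

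For the rigidity statement, suppose $t\mapsto\mu(g(t),t_0-t)$ is constant. Then for any $t_1<t_2<t_0$ both inequalities in the display are equalities, so $\cW(g(t),f(t),\tau(t))$ is constant on $[t_1,t_2]$, and by \eqref{e:perelmono} we must have $\Ric+\nabla^2 f=\tfrac{1}{2\tau}g$ on $M\times[t_1,t_2]$ (the measure $dv$ has full support). Undoing the diffeomorphisms, the flow on $[t_1,t_2]$ is a gradient shrinking soliton with singular time $t_0$; since $f(t_2)=f_2$ was an arbitrary minimizer, this also shows that any minimizer of $\mu(g(t_2),t_0-t_2)$ satisfies the soliton equation. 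Letting $t_2\uparrow t_0$ and $t_1\downarrow -T$, and using that the families $f(t)$ produced from minimizers at different times $t_2$ agree on overlaps (at the common earlier time the family coming from the later $t_2$ is again a minimizer, hence, by uniqueness of the minimizer once $\mu$ is fixed, equals the one constructed there), the soliton structure extends to all $t<t_0$. Conversely, if the flow is isometric to a gradient shrinking soliton with singular time $t_0$, then it is self-similar---each time slice is a fixed rescaling of a fixed metric---so the scaling invariance $\mu(cg,c\tau)=\mu(g,\tau)$ forces $t\mapsto\mu(g(t),t_0-t)$ to be constant.

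The monotonicity direction is routine once Theorem~\ref{t:pfirstvar} is available. The genuine obstacle is in the rigidity statement: one must know that a smooth minimizer $f(t)$ exists at each time, that it is unique once the value of $\mu$ is pinned (so that the soliton identities obtained on different subintervals piece together into one smooth family), and---should constancy of $\mu$ be assumed only on a subinterval---that a Ricci flow which is a gradient shrinking soliton on an open set of times is one for all times, which can be seen from real-analyticity in time of the flow or from (backward) uniqueness for the Ricci--DeTurck flow.
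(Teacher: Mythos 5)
The paper offers no proof of this corollary, citing \cite{KL,P}; your argument---evolve a minimizer backward by the conjugate heat equation, apply Theorem \ref{t:pfirstvar}, and sandwich with the definition of $\mu$ as an infimum---is exactly the standard derivation intended there, and both the monotonicity and the rigidity direction are correct in substance. One side-claim is false, however: minimizers of $\mu(g,\tau)$ are \emph{not} unique once the value of $\mu$ is fixed (on the Gaussian shrinker $\R^n$ every $f = |x-x_0|^2/4\tau$ is a minimizer), so your mechanism for gluing the families $f(t)$ obtained from different starting times $t_2$ does not work as stated. Fortunately that step is unnecessary: the corollary only asserts that \emph{any} minimizer at \emph{any} fixed time $t$ is a soliton function, and this follows directly by running your equality chain on $[t_1,t]$ with $t_2=t$ and letting continuity of the integrand in \eqref{e:perelmono} give $\Ric+\nabla^2 f=\frac{g}{2\tau}$ at the endpoint $t$ itself; no compatibility between minimizers at different times is needed. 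Your converse via self-similarity and the scale- and diffeomorphism-invariance of $\mu$ is fine, as is your (correctly flagged) reliance on existence and smoothness of minimizers on compact $M$.
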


\begin{corollary}\label{c:rflogsob}
For a fixed $t_0 \in \R$, define $\mu_0 \equiv \mu(g(-T), t_0+T)$ and write $\tau \equiv t_0 - t$. Then
\begin{equation}\label{e:rflogsob}
\int \phi^2 \log \phi^2 \, d\vol \leq
 \tau \int (4|\nabla\phi|^2 + R\phi^2)  \, d\vol - \frac{n}{2} \log 4\pi\tau - n - \mu_0
\end{equation}
holds for all $\phi \in C^\infty_0(M)$ with $\int \phi^2 \,d\vol = 1$ as long as $t < t_0$.
\end{corollary}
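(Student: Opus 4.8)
The plan is to read off \eqref{e:rflogsob} as an essentially formal consequence of the definition of $\mu(g,\tau)$ in Definition \ref{d:entropy} together with the monotonicity of $\mu(g(t),t_0-t)$ in $t$ recorded after Theorem \ref{t:pfirstvar}; the only genuine input is Perelman's monotonicity, which we take as given. Fix $t\in[-T,t_0)$, set $\tau=t_0-t$, and let $\phi\in C^\infty_0(M)$ with $\int\phi^2\,d\vol=1$. The idea is to feed $\phi^2$ into the $\cW$-functional as the density: define $f$ by $(4\pi\tau)^{-n/2}e^{-f}=\phi^2$, that is, $f=-\tfrac n2\log(4\pi\tau)-\log\phi^2$. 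Then $(4\pi\tau)^{-n/2}e^{-f}\,d\vol$ is a probability measure, so by definition of $\mu$ we have $\mu(g(t),\tau)\le\cW(g(t),f,\tau)$.

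Next I would expand $\cW(g(t),f,\tau)$ for this choice of $f$. On $\{\phi\neq 0\}$ one has $\nabla f=-2\phi^{-1}\nabla\phi$, hence $|\nabla f|^2\phi^2=4|\nabla\phi|^2$; the terms involving $R$, $f$, and the constant $-n$ are elementary, and altogether
\[
\cW(g(t),f,\tau)=\tau\int\left(4|\nabla\phi|^2+R\phi^2\right)d\vol-\int\phi^2\log\phi^2\,d\vol-\tfrac n2\log(4\pi\tau)-n.
\]
Then I would invoke monotonicity: since $\mu(g(t),t_0-t)$ is nondecreasing in $t<t_0$ and $t\ge-T$, we get $\mu(g(t),\tau)\ge\mu(g(-T),t_0+T)=\mu_0$. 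Chaining $\mu_0\le\mu(g(t),\tau)\le\cW(g(t),f,\tau)$ with the displayed identity and rearranging gives exactly \eqref{e:rflogsob}.

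The one point that needs care — and I expect it to be the only obstacle here — is that the $f$ above is not smooth (indeed equals $+\infty$) wherever $\phi$ vanishes, so strictly speaking neither $\cW(g(t),f,\tau)$ nor the bound $\mu(g(t),\tau)\le\cW(g(t),f,\tau)$ is literally covered by Definition \ref{d:entropy}. This is handled by a routine regularization, carried out for definiteness on compact $M$ (which is anyway the setting in which Perelman's monotonicity is being used): replace $\phi^2$ by the smooth positive density $\phi_\delta^2\equiv(\phi^2+\delta)/(1+\delta\,\vol(M))$, for which $f_\delta=-\tfrac n2\log(4\pi\tau)-\log\phi_\delta^2$ is genuinely smooth and $\int\phi_\delta^2\,d\vol=1$; apply the argument above to $\phi_\delta$; and let $\delta\to0$. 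Since $\phi_\delta^2\to\phi^2$ uniformly, $|\nabla\phi_\delta|^2\le|\nabla\phi|^2$ with pointwise convergence, $R$ is bounded, and $x\mapsto x\log x$ extends continuously to $x=0$, every term passes to the limit by dominated convergence, yielding \eqref{e:rflogsob} for the original $\phi$. (On a noncompact $M$ with bounded geometry one additionally inserts a cutoff exhausting $M$, whose correction terms vanish in the limit.)
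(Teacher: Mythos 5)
Your proof is correct and is precisely the standard argument the paper relies on (it cites this corollary to Perelman/Kleiner--Lott without writing out a proof): substitute $\phi^2=(4\pi\tau)^{-n/2}e^{-f}$ into $\cW$, use $\mu(g(t),\tau)\le\cW(g(t),f,\tau)$, and apply the monotonicity $\mu(g(t),t_0-t)\ge\mu(g(-T),t_0+T)=\mu_0$. Your regularization of $f$ where $\phi$ vanishes is a sensible way to handle the only technical point, and the computation of $\cW$ for this choice of $f$ is accurate.
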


We will then also need to recall how (\ref{e:rflogsob}) implies Perelman's no local collapsing
theorem \cite{P}. We state an improved version of this result that only requires an upper scalar curvature bound \cite{KL}. As reported in \cite{KL}, this is due to Perelman as well.
The way we organize the proof may be slightly simpler than the version in \cite{KL} and was inspired by an argument in \cite{carron}; see also \cite{zhang1}.

\begin{theorem}\label{t:stronger_nlc}
Fix any $t \in [-T,0]$, $x \in M$, and $r > 0$, and suppose that we have
\begin{align}
\inf_{\rho \in (0,r)}\mu(g(-T), t + T + \rho^2) \geq -C, \;\, \sup_{B_r(x,t)} R[g(t)] \leq C r^{-2}.
\end{align}
Defining $\kappa \equiv \exp(-(2^{n+4} + 2C))$ it then follows that
\begin{align}|B_r(x,t)| \geq \kappa r^n.\end{align}
\end{theorem}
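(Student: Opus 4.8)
The plan is to feed a carefully chosen cutoff function into the Ricci-flow log-Sobolev inequality \eqref{e:rflogsob} of Corollary \ref{c:rflogsob}, applied at the given time $t$, with $\tau$ of order $r^2$ so that the correction term $\mu_0$ is controlled by the first hypothesis. Argue by contradiction: suppose $|B_r(x,t)| < \kappa r^n$. For a radius $\rho\in(0,r]$ let $\psi$ be a cutoff of $d_{g(t)}(x,\cdot)$ equal to $1$ on $B_{\rho/2}(x,t)$, vanishing outside $B_\rho(x,t)$, with $|\nabla\psi|\leq 2/\rho$; after smoothing, $\phi:=\psi/\|\psi\|_{L^2(d\vol_{g(t)})}$ is admissible in \eqref{e:rflogsob}. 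Since $\mathrm{supp}\,\phi\subseteq B_\rho(x,t)\subseteq B_r(x,t)$, the curvature hypothesis gives $\int R\phi^2\,d\vol_{g(t)}\leq\sup_{B_r(x,t)}R\leq Cr^{-2}$, and convexity of $s\mapsto s\log s$ (Jensen against the normalized volume measure on $B_\rho(x,t)$) gives $\int\phi^2\log\phi^2\,d\vol_{g(t)}\geq-\log|B_\rho(x,t)|$. Feeding these into \eqref{e:rflogsob} with $\tau=\rho^2$, so that $\mu_0=\mu(g(-T),t+T+\rho^2)\geq-C$ when $\rho<r$ (and with a limit $\tau\uparrow r^2$ in the borderline case $\rho=r$), and using $\rho\leq r$ to absorb $\rho^2Cr^{-2}\leq C$, gives
\begin{align*}
-\log|B_\rho(x,t)|\;\leq\;4\rho^2\!\int|\nabla\phi|^2\,d\vol_{g(t)}\;+\;2C\;-\;\tfrac n2\log(4\pi\rho^2)\;-\;n.
\end{align*}

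The one term still uncontrolled is the Dirichlet energy, which for the above cutoff satisfies $\int|\nabla\phi|^2\,d\vol_{g(t)}\leq\frac{4}{\rho^2}\bigl(|B_\rho(x,t)|/|B_{\rho/2}(x,t)|-1\bigr)$; thus the argument closes as soon as we can run it at a radius $\rho\leq r$ at which a uniform volume-doubling bound $|B_\rho(x,t)|\leq 2^n|B_{\rho/2}(x,t)|$ holds. \emph{Producing such a $\rho$ is the only real point of the proof.} I would obtain it by a dyadic selection: put $\rho_k:=r/2^k$ and stop at the first $k$ with $|B_{\rho_k}(x,t)|\leq 2^n|B_{\rho_k/2}(x,t)|$. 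Were this never to happen, then $|B_{r/2^k}(x,t)|<2^{-nk}|B_r(x,t)|<2^{-nk}\kappa r^n$ for every $k$, contradicting the Euclidean small-scale asymptotics $|B_\sigma(x,t)|\sim\omega_n\sigma^n$ as $\sigma\to0$ once $\kappa<\tfrac12\omega_n$ (which holds for our $\kappa$, as $2^{n+4}$ dominates $\log(1/\omega_n)$). Hence the process stops at some $\rho:=\rho_{k^*}\leq r$, and by construction $|B_\rho(x,t)|/\rho^n\leq|B_r(x,t)|/r^n<\kappa$.

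At this radius, doubling turns the Dirichlet estimate into $4\rho^2\int|\nabla\phi|^2\,d\vol_{g(t)}<16\cdot2^n=2^{n+4}$, so the displayed inequality, together with $(4\pi)^{n/2}e^n\geq1$, yields
\begin{align*}
|B_\rho(x,t)|\;\geq\;(4\pi)^{n/2}e^n\,e^{-(2^{n+4}+2C)}\,\rho^n\;\geq\;\kappa\rho^n,
\end{align*}
which contradicts $|B_\rho(x,t)|<\kappa\rho^n$. Therefore $|B_r(x,t)|\geq\kappa r^n$, as claimed; the constant $2^{n+4}$ is exactly the crude value $16\cdot2^n$ coming out of the dyadic doubling constant $2^n$. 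Apart from the doubling reduction, every step is a routine ``cutoff into log-Sobolev'' computation. (Alternatively, one can bypass the reduction by testing \eqref{e:rflogsob} directly with a Gaussian-type profile $\phi^2\propto\exp(-d_{g(t)}(x,\cdot)^2/4\tau)$ truncated well inside $B_r(x,t)$: for such $\phi$ the Dirichlet term and the entropy term nearly cancel --- this is precisely the equality case of the log-Sobolev --- so no volume ratio ever enters, and the bounded-geometry assumption handles the Gaussian tails.)
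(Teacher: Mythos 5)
Your argument is correct and is essentially the paper's own proof: the same radial cutoff fed into \eqref{e:rflogsob} with $\tau=\rho^2$, Jensen's inequality on the entropy term, and the ratio $|B_\rho(x,t)|/|B_{\rho/2}(x,t)|$ controlling the Dirichlet energy --- your dyadic stopping-scale selection is just the contrapositive of the paper's iterated implication $|B_{\rho/2}(x,t)|\geq\kappa(\rho/2)^n\Rightarrow|B_\rho(x,t)|\geq\kappa\rho^n$, run down from $r$ rather than up from the small scales where $\kappa<\omega_n$ supplies the base case. (Only the parenthetical Gaussian-profile alternative should be dropped: the truncation term there reintroduces a volume ratio, or else constants depending on the bounded-geometry assumption, which $\kappa$ is not allowed to see.)
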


\begin{proof}
We work in the $t$-time slice. For $\rho \in (0,r]$ define a Lipschitz function $\psi$ to be $\equiv 1$ on $B(x,\frac{\rho}{2})$, $\equiv 0$ off $B(x,\rho)$, and linear in $d(x, -)$ in between, and apply (\ref{e:rflogsob}) to $\phi \equiv \psi/\|\psi\|_2$. Using Jensen's inequality with respect to $\frac{d\vol}{|B(x,\rho)|}$ on $B(x,\rho)$ to bound the left-hand side from below, we obtain
$$
\log \frac{1}{|B(x,\rho)|} \leq \frac{16\tau}{\rho^2}\left(\frac{|B(x,\rho)|}{|B(x,\frac{\rho}{2})|} - 1\right) + \frac{C\tau}{\rho^2} - \frac{n}{2}\log(4\pi \tau)- n - \mu_0.
$$
This holds for any given $\tau > 0$, with $\mu_0$ depending on $\tau$ by definition. We now make $\tau \equiv \rho^2$. Using our definition of $\kappa$, it is then easy to prove the following implication:
$$
\left|B\left(x,\frac{\rho}{2}\right)\right| \geq \kappa\left(\frac{\rho}{2}\right)^n \;\Longrightarrow\;  |B(x,\rho)| \geq \kappa \rho^n.
$$
Since $\kappa$ is smaller than the volume of $B_{\R^n}(1)$, the claim follows from this by iteration.
\end{proof}

Finally, we summarize some basic properties of the localized entropies $\cW_{x}(s)$ and $\cN_x(s)$ that we introduced in Definitions \ref{d:pointed_entropy} and \ref{d:nash_entropy}. The first couple of facts are clear from Theorem \ref{t:pfirstvar}.

\begin{proposition}\label{p:ptd_entropy}
The following hold for all $x\in M$ and all $s \in [-T,0)$.
\begin{enumerate}
 \item $\lim_{s\to 0}\cW_x(s)=0$.
 \item $\mu(g(-T),T) \leq \cW_x(s)\leq 0$.
 \item $\cW_x(s) = -\int_s^0 2|r| \int | \Ric + \nabla^2 f_{x} -\frac{g}{2|r|}|^2 \, d\nu_{x}(r)\,dr$.
\end{enumerate}
\end{proposition}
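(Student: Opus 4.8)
The plan is to deduce all three assertions from Perelman's monotonicity formula and its corollaries, with only the short-time limit in (1) requiring genuine analysis. \textbf{For (3)}, I would apply Theorem \ref{t:pfirstvar} with $t_0 = 0$, $\tau(t) = |t|$, $dv = d\nu_x(t)$, and $f = f_x(t)$, where $H_x = (4\pi|t|)^{-n/2}e^{-f_x}$. Since $H_x$ is a unit-mass solution of $\Box^*H_x = 0$, the hypothesis \eqref{e:gradient_flow} holds after pulling back by the time-dependent diffeomorphisms generated by $-\nabla f_x$; because $\cW$ is invariant under this simultaneous change of variables (see the remark after Theorem \ref{t:pfirstvar}), formula \eqref{e:perelmono} becomes
\[
\frac{d}{dr}\cW_x(r) \;=\; 2|r|\int \Bigl|\Ric + \nabla^2 f_x - \frac{g}{2|r|}\Bigr|^2 d\nu_x(r),\qquad r\in[-T,0).
\]
Integrating over $[s,0)$ and invoking $\lim_{r\to 0}\cW_x(r)=0$ from item (1) gives the displayed formula in (3); in particular $\cW_x$ is nondecreasing on $[-T,0)$.

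\textbf{For (2)}, the bound $\cW_x(s)\le 0$ is then immediate, since $\cW_x$ is nondecreasing with right-hand limit $0$. For the lower bound I would observe that $f_x(s)$ is admissible in the variational problem defining $\mu(g(s),|s|)$, as the measure $d\nu_x(s)$ has unit mass, so that $\mu(g(s),|s|)\le\cW(g(s),f_x(s),|s|)=\cW_x(s)$; and the first corollary following Theorem \ref{t:pfirstvar}, applied with $t_0=0$, asserts precisely that $\mu(g(t),|t|)$ is nondecreasing in $t\in[-T,0)$, whence $\mu(g(-T),T)\le\mu(g(s),|s|)\le\cW_x(s)$.

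\textbf{Item (1) is the main obstacle.} The idea is to parabolically rescale the flow by $|s|^{-1/2}$ about $(x,0)$ and let $s\to 0^-$: on a fixed parabolic neighborhood of $(x,0)$ the rescaled metrics converge smoothly to the flat metric, while two-sided Gaussian bounds for the conjugate heat kernel near its pole (cf.\ \cite{zhang2}) show that the rescaled kernels converge to the Euclidean Gaussian and furnish a fixed integrable Gaussian weight dominating the $f_x$ and $|\nabla f_x|^2$ contributions; the term $|s|\int R\,d\nu_x(s)$ tends to $0$ directly because $R$ is locally bounded near $(x,0)$, $H_x(\cdot,s)$ decays like a Gaussian, and $\nu_x(s)\to\delta_x$. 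Dominated convergence then gives $\cW_x(s)\to\cW\bigl(g_{\R^n},\tfrac14|y|^2,1\bigr)$, and the right-hand side vanishes because the $\cW$-functional of the standard Gaussian on $\R^n$ is $0$, which is the elementary moment computation $\int_{\R^n}\tfrac14|y|^2\,(4\pi)^{-n/2}e^{-|y|^2/4}\,dy=\tfrac n2$. The delicate points are the smooth convergence of the geometry near the pole and the uniform domination; one may of course shortcut the entire step by quoting the known short-time asymptotics of the Ricci-flow conjugate heat kernel.
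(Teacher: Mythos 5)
Your proposal is correct and follows essentially the same route as the paper, which offers no details beyond remarking that these facts are ``clear from Theorem \ref{t:pfirstvar}'': items (2) and (3) come from Perelman's monotonicity formula (in the diffeomorphism-corrected gauge) together with admissibility of $f_x(s)$ in the definition of $\mu(g(s),|s|)$ and monotonicity of $\mu$, exactly as you argue. Your rescaling/Gaussian-asymptotics sketch for the short-time limit in (1) supplies a detail the paper simply asserts, and it is the standard way to justify it.
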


The following straightforward computation explains the use of the Nash entropy.

\begin{lemma}\label{l:basic_nash}
Let $u$ be a smooth positive solution to the conjugate
heat equation, of rapid decay and of unit mass.
Fix $t_0 \in \R$, put $\tau \equiv t_0  - t$ for $t < t_0$, and write $u \equiv (4\pi \tau)^{-\frac{n}{2}}e^{-f}$.
Then
\begin{align}
\frac{d}{dt}\left(\tau\int u \log u \, d\vol\right) = \mathcal{W}(g,f,\tau) + n + \frac{n}{2}\log 4\pi\tau.
\end{align}
\end{lemma}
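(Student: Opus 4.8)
The plan is to reduce the statement to the defining formula for $\cW$ by first differentiating the ``plain'' entropy $\int u\log u\,d\vol$ and only afterwards reinstating the factor $\tau$. Since $u = (4\pi\tau)^{-\frac n2}e^{-f}$ we have $\log u = -\frac n2\log(4\pi\tau) - f$, so by the unit-mass hypothesis $\tau\int u\log u\,d\vol = -\frac{n\tau}{2}\log(4\pi\tau) - \tau\int fu\,d\vol$; thus it suffices to control $\frac{d}{dt}\int u\log u\,d\vol$. The three ingredients I would use repeatedly are: the evolution $\frac{\partial}{\partial t}d\vol = -R\,d\vol$ along the Ricci flow; the conjugate heat equation $\partial_t u = -\Delta u + Ru$; and $\frac{d\tau}{dt} = -1$.

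First I would compute $\frac{d}{dt}\int u\log u\,d\vol$. Differentiating under the integral and using $\partial_t(u\log u) = (\partial_t u)(\log u + 1)$ together with $\frac{\partial}{\partial t}d\vol = -R\,d\vol$, then substituting the conjugate heat equation, the two terms carrying $Ru\log u$ cancel and one is left with $-\int(\Delta u)(\log u + 1)\,d\vol + \int Ru\,d\vol$. Integrating by parts (legitimate because $u$ has rapid decay, so all boundary terms vanish) turns the first term into $\int\langle\nabla u,\nabla\log u\rangle\,d\vol = \int |\nabla u|^2/u\,d\vol$, and since $\nabla u = -u\nabla f$ this equals $\int u|\nabla f|^2\,d\vol$. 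Hence $\frac{d}{dt}\int u\log u\,d\vol = \int(|\nabla f|^2 + R)\,u\,d\vol$.

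Next I would put the factor $\tau$ back: by the product rule and $\frac{d\tau}{dt} = -1$ one gets $\frac{d}{dt}\bigl(\tau\int u\log u\,d\vol\bigr) = -\int u\log u\,d\vol + \tau\int(|\nabla f|^2 + R)\,u\,d\vol$. Then I would substitute $-\int u\log u\,d\vol = \frac n2\log(4\pi\tau) + \int fu\,d\vol$ (again by unit mass) and recognize, directly from Definition \ref{d:entropy} and unit mass, that $\int fu\,d\vol + \tau\int(|\nabla f|^2 + R)\,u\,d\vol = \cW(g,f,\tau) + n$. Assembling these identities yields exactly $\frac{d}{dt}\bigl(\tau\int u\log u\,d\vol\bigr) = \cW(g,f,\tau) + n + \frac n2\log 4\pi\tau$.

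I do not expect a genuine obstacle here; as the surrounding text indicates, this is a bookkeeping computation. The only points needing a little care are the differentiation under the integral sign and the integration by parts — this is precisely where the ``rapid decay'' hypothesis on $u$ enters, guaranteeing that $u\log u$, $u|\nabla f|^2 = |\nabla u|^2/u$, and the discarded boundary terms all decay fast enough — and correctly tracking the signs produced by $\frac{d\tau}{dt} = -1$ and by $\frac{\partial}{\partial t}d\vol = -R\,d\vol$.
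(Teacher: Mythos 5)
Your computation is correct and is precisely the ``straightforward computation'' the paper alludes to without writing out: differentiate $\int u\log u\,d\vol$ using $\partial_t u = -\Delta u + Ru$ and $\partial_t\,d\vol = -R\,d\vol$, integrate by parts to get $\int(|\nabla f|^2+R)u\,d\vol$, then apply the product rule with $d\tau/dt=-1$ and the unit-mass identity $-\int u\log u\,d\vol = \frac n2\log 4\pi\tau + \int fu\,d\vol$ to recognize $\cW(g,f,\tau)+n$. The signs and the use of the rapid-decay hypothesis to justify the integration by parts are all in order, and the result is consistent with Proposition \ref{p:nash_entropy}.
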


Let us then summarize what we learn from this together with Proposition \ref{p:ptd_entropy}.

\begin{proposition}\label{p:nash_entropy}
The following hold for all $x \in M$ and $s \in [-T,0)$.
\begin{enumerate}
\item $\cW_x(s) \leq \cN_x(s) \leq 0$.
\item $\frac{d}{ds}\cN_x(s) = \frac{1}{|s|}({\mathcal{N}_x(s) - \cW_x(s)}) \geq 0$.
\item \label{e:nash_property}$\cN_x(s) = -\int \log H_x(s) \,d\nu_x(s) - \frac{n}{2}(1 + \log 4\pi|s|) = \int f_{x}(s)\,d\nu_{x}(s) - \frac{n}{2}$.
 \item $\cN_x(s) = -\int_s^{0} 2|r|(1 - \frac{r}{s})\int |\Ric + \nabla^2f_x - \frac{g}{2|r|} |^2\,d\nu_x(r)\,  dr$.
\end{enumerate}
\end{proposition}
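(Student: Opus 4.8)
The plan is to derive all four parts purely formally from the definition $\cN_x(s) = \frac{1}{|s|}\int_s^0 \cW_x(r)\,dr$ in Definition \ref{d:nash_entropy}, together with Lemma \ref{l:basic_nash} and Proposition \ref{p:ptd_entropy}; no new geometric estimates are required, and the only external input is a short-time asymptotic for the conjugate heat kernel. I would establish (3) first, since it is the one genuinely computational point, and then (1), (2), (4) follow by elementary manipulations.

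For (3), apply Lemma \ref{l:basic_nash} with $t_0 = 0$, $\tau = |t|$, and $u = H_x$, which has unit mass by Lemma \ref{l:hk_properties}(1) and the required decay by standard Gaussian bounds. This gives $\frac{d}{dt}\bigl(|t|\int H_x \log H_x\,d\vol_{g(t)}\bigr) = \cW_x(t) + n + \frac{n}{2}\log 4\pi|t|$. The right-hand side is integrable on $[s,0]$ because $\cW_x$ is bounded near $0$ by Proposition \ref{p:ptd_entropy}(2) and $\log|t|$ is integrable at $0$. Integrating from $s$ to $0$, using $\int_s^0 n\,dt = n|s|$ and $\int_s^0 \frac{n}{2}\log 4\pi|t|\,dt = \frac{n}{2}|s|\log 4\pi|s| - \frac{n}{2}|s|$, using $\int_s^0 \cW_x(t)\,dt = |s|\cN_x(s)$ by definition, and using the boundary value $\lim_{t\to 0}|t|\int H_x\log H_x\,d\vol_{g(t)} = 0$, then dividing by $|s|$, yields $-\int \log H_x(s)\,d\nu_x(s) = \cN_x(s) + \frac{n}{2}(1 + \log 4\pi|s|)$, which is the first equality in (3). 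The second equality is then immediate from $\log H_x = -f_x - \frac{n}{2}\log 4\pi|s|$.

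For (1), recall from Proposition \ref{p:ptd_entropy} that $\cW_x$ is nondecreasing on $[-T,0)$, that $\cW_x \le 0$, and that $\cW_x(r)\to 0$ as $r\to 0$. Since $\cN_x(s)$ is the average of $\cW_x$ over $[s,0]$, monotonicity gives $\cW_x(s) \le \cN_x(s)$, and $\cW_x \le 0$ (or equivalently the limit at $0$) gives $\cN_x(s) \le 0$. For (2), write $\cN_x(s) = -\frac{1}{s}\int_s^0\cW_x(r)\,dr$ and differentiate: $\frac{d}{ds}\cN_x(s) = \frac{1}{s^2}\int_s^0 \cW_x(r)\,dr + \frac{1}{s}\cW_x(s) = \frac{1}{|s|}\bigl(\cN_x(s) - \cW_x(s)\bigr)$, which is $\ge 0$ by (1).

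Finally, for (4) I would substitute the formula of Proposition \ref{p:ptd_entropy}(3), namely $\cW_x(r) = -\int_r^0 h(\rho)\,d\rho$ with $h(\rho) \equiv 2|\rho|\int |\Ric + \nabla^2 f_x - \frac{g}{2|\rho|}|^2\,d\nu_x(\rho) \ge 0$, into $\cN_x(s) = \frac{1}{|s|}\int_s^0 \cW_x(r)\,dr$ and apply Fubini to the iterated integral over the triangle $\{s \le r \le \rho \le 0\}$: this gives $\cN_x(s) = -\frac{1}{|s|}\int_s^0 (\rho - s)\,h(\rho)\,d\rho = -\int_s^0 \bigl(1 - \tfrac{\rho}{s}\bigr)h(\rho)\,d\rho$, which is the claimed identity after renaming $\rho$ to $r$. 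The only step needing anything beyond bookkeeping is the boundary evaluation $\lim_{t\to 0}|t|\int H_x \log H_x\,d\vol_{g(t)} = 0$ used in (3); this is exactly the statement that the conjugate heat kernel is asymptotically Euclidean at small times (so that $\int H_x\log H_x\,d\vol_{g(t)} = -\frac{n}{2}\log 4\pi|t| + O(1)$), the same fact that underlies Proposition \ref{p:ptd_entropy}(1), and I would cite it there rather than reprove it. That is the main, and essentially only, obstacle.
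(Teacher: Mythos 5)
Your proposal is correct and follows exactly the route the paper intends: integrate the identity of Lemma \ref{l:basic_nash} from $s$ to $0$ (with the short-time asymptotics giving the vanishing boundary term) to get (3), use the monotonicity and vanishing limit of $\cW_x$ from Proposition \ref{p:ptd_entropy} for (1), differentiate the defining average for (2), and apply Fubini to Proposition \ref{p:ptd_entropy}(3) for (4). All computations check out, so there is nothing to add.
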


\subsection{Heat kernel estimates}\label{s:hkest}

Here we provide careful statements and applications of some useful estimates due to Zhang \cite{zhang_gradient, zhang1, zhang2}.  As usual, we let $(M^n, g(t))$ denote a Ricci flow as in (\ref{e:RF}).

In \cite{zhang1}, Davies's method \cite{D} for deriving $L^\infty$ heat kernel estimates from
log-Sobolev inequalities is applied to the Ricci flow, based on Corollary \ref{c:rflogsob}. During the proof, the scalar curvature term in (\ref{e:rflogsob}) gets compensated by the evolution of the Riemannian measure, so that the final result holds without any upper scalar curvature assumptions.

\begin{theorem}\label{t:upper_rf_heat_kernel}
Define two auxiliary quantities
\begin{equation}\label{e:upper_hk_aux}
\rho \equiv \|R[g(-T)]^-\|_\infty, \;\, \mu \equiv \inf_{\tau \in (0,2T)} \mu(g(-T), \tau).
\end{equation}
Let $u: M \times [t_1,t_2] \to \R^+$ with $[t_1,t_2] \subseteq [-T,0]$ be a smooth positive solution to
\begin{equation}
\frac{\partial u}{\partial t} = \Delta_{g(t)} u.
\end{equation}
For each $t \in [t_1,t_2]$ we then have
\begin{equation}\label{e:uniform_upper_bound}
\|u(t)\|_\infty \leq  (4\pi (t-t_1))^{-\frac{n}{2}} e^{\rho (t-t_1) -\mu} \|u(t_1)\|_1.
\end{equation}
\end{theorem}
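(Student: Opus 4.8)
The plan is to run Davies's method \cite{D} for deducing ultracontractive bounds from a log-Sobolev inequality, carried out along the Ricci flow as in \cite{zhang1}, taking Corollary \ref{c:rflogsob} as the input. I would first treat the case $M$ compact; the general case needs only a routine cutoff argument to legitimize the integrations by parts below and the endpoint limits, together with the a priori finiteness of $\|u(t_1)\|_1$. Fix the target time $t\in(t_1,t_2]$. The idea is to interpolate between the $L^1$-norm at $t_1$ and the $L^\infty$-norm at $t$ by a moving exponent; concretely I would take
\[
p(\sigma)=\frac{t-t_1}{t-\sigma},\qquad \sigma\in[t_1,t],
\]
so that $p(t_1)=1$, $p(\sigma)\to\infty$ as $\sigma\to t$, and $\dot p>0$ throughout.

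Next I would differentiate $\phi(\sigma)\equiv\int_M u^{p(\sigma)}\,d\vol_{g(\sigma)}$. Using $\partial_\sigma u=\Delta u$, the evolution $\partial_\sigma\,d\vol_{g(\sigma)}=-R\,d\vol_{g(\sigma)}$, an integration by parts, and the shorthand $v\equiv u^{p/2}$, this gives
\[
\phi'=-\frac{4(p-1)}{p}\int|\nabla v|^2\,d\vol-\int R\,u^p\,d\vol+\dot p\int u^p\log u\,d\vol .
\]
I would control the last term by applying (\ref{e:rflogsob}) to $\varphi\equiv v/\phi^{1/2}$, with the scale parameter $\tau=\tau(\sigma)>0$ kept free (this is legitimate because Corollary \ref{c:rflogsob} holds for any choice of $t_0$, here $t_0=\sigma+\tau$, at the cost of replacing $\mu_0$ by $\mu(g(-T),\sigma+\tau+T)$). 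Choosing
\[
\tau(\sigma)=\frac{p(\sigma)-1}{\dot p(\sigma)}=\frac{(\sigma-t_1)(t-\sigma)}{t-t_1}
\]
makes the gradient term cancel identically. Two observations then finish this stage: the quantity $\sigma+\tau(\sigma)+T$ remains inside $(0,2T)$, so that $\mu(g(-T),\sigma+\tau+T)\ge\mu$; and the two scalar-curvature contributions combine into the single term $-\tfrac1p\int R\,u^p\,d\vol\le\tfrac\rho p\,\phi$, which is controlled using only $R\ge-\rho$. This is exactly the step where the evolving Riemannian measure absorbs the scalar-curvature term of the log-Sobolev inequality, so that no upper bound on $R$ is needed. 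After the cancellations (the $\log\phi$ terms drop out as well) one is left with the differential inequality
\[
\frac{d}{d\sigma}\log\|u(\sigma)\|_{p(\sigma)}\le\frac{\rho}{p^2}-\frac{\dot p}{p^2}\Bigl(\tfrac n2\log 4\pi\tau+n+\mu(g(-T),\sigma+\tau+T)\Bigr).
\]

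Finally I would integrate this over $[t_1,t]$. The left-hand side yields $\log\|u(t)\|_\infty-\log\|u(t_1)\|_1$ (using $p(t_1)=1$ and $p(\sigma)\to\infty$). On the right, the substitution $s=1/p=(t-\sigma)/(t-t_1)$ turns $\tfrac{\dot p}{p^2}\,d\sigma$ into $-ds$ and $\tau$ into $(t-t_1)\,s(1-s)$, so that everything reduces to elementary integrals; with $\int_{t_1}^t\tfrac{\dot p}{p^2}\,d\sigma=1$, $\int_0^1\log\bigl(s(1-s)\bigr)\,ds=-2$, the bound $\mu(g(-T),\sigma+\tau+T)\ge\mu$, and $\rho/p^2\le\rho$, one finds the right-hand side is at most $-\tfrac n2\log 4\pi(t-t_1)+\rho(t-t_1)-\mu$. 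Exponentiating gives exactly (\ref{e:uniform_upper_bound}).

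The part I expect to demand the most care is not any single estimate but the design of the argument so that it closes with the sharp constant: the functions $p(\sigma)$ and $\tau(\sigma)$ must be arranged so that simultaneously the gradient term cancels, the residual scalar-curvature term is absorbed using only $R\ge-\rho$, the entropy argument $\sigma+\tau(\sigma)+T$ never leaves $(0,2T)$, and the remaining elementary integrals reproduce precisely the Euclidean prefactor $(4\pi(t-t_1))^{-n/2}$. In the noncompact setting there is the further routine task of using cutoffs and the bounded-geometry hypothesis to justify the integrations by parts and the endpoint limits $\|u(\sigma)\|_{p(\sigma)}\to\|u(t_1)\|_1$ as $\sigma\to t_1$ and $\|u(\sigma)\|_{p(\sigma)}\to\|u(t)\|_\infty$ as $\sigma\to t$.
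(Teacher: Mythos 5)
Your proposal is correct and is precisely the argument the paper intends: it defers to Zhang \cite{zhang1}, describing the proof as Davies's method run along the Ricci flow with Corollary \ref{c:rflogsob} as input and the scalar curvature term of the log-Sobolev absorbed by the evolution of the volume form, which is exactly what you carry out. Your choices $p(\sigma)=(t-t_1)/(t-\sigma)$ and $\tau(\sigma)=(p-1)/\dot p$ and the resulting elementary integrals reproduce the stated constant exactly, and the technical points you flag (cutoffs, endpoint limits of $\|u(\sigma)\|_{p(\sigma)}$) are indeed the only remaining routine matters.
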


Hamilton \cite{Ham} proved a Harnack inequality for positive solutions to the heat equation on manifolds with lower Ricci bounds that only involves the gradient in the space variables. In \cite{zhang_gradient} this idea was applied to the Ricci flow, using the Bochner formula of Lemma \ref{l:rf_bochner} as the key tool.

\begin{theorem}\label{t:zhang_grad_est}
Let $u: M \times [t_1,t_2] \to \R^+ $ with $[t_1,t_2] \subseteq [-T,0]$ be a smooth positive solution to
\begin{equation}
\frac{\partial u}{\partial t} = \Delta_{g(t)}u.
\end{equation}
Then we have a spatial Harnack estimate
\begin{equation}\label{e:zhang_grad_est}
\left|\nabla \sqrt{\log \frac{\sup u}{u}}\right| \leq \frac{1}{\sqrt{t-t_1}}.
\end{equation}
\end{theorem}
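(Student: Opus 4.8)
The plan is to adapt Hamilton's \cite{Ham} maximum-principle argument to the Ricci-flow setting. The crucial structural point is that along the flow the Bochner formula of Lemma \ref{l:rf_bochner} carries \emph{no curvature term} — the Ricci contribution from the evolving metric cancels the one in the elliptic Bochner identity — so that, unlike in Hamilton's static result, no curvature hypothesis is needed. First I would reduce to a differential inequality. By scaling and by translating $t$ we may take $t_1 = 0$, and replacing $u$ by $u/\sup u$ (the supremum over $M\times[0,t_2]$, which by the maximum principle equals the finite number $\sup_M u(\cdot,0)$) we may assume $\sup u = 1$. If $u$ is constant the estimate is trivial, so by the strong maximum principle $0<u<1$ on $M\times(0,t_2]$. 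Put $f\equiv-\log u>0$ and $B\equiv|\nabla f|^2_{g(t)}$; from $\partial_t u=\Delta u$ one computes $\Box f=-B$. Since $|\nabla\sqrt f|^2=B/(4f)$, the claim is equivalent to $tB/f\le 4$. To sidestep the possible vanishing of $f$ on the initial slice one first runs the argument with $f_\delta\equiv\log\frac{1+\delta}{u}\ge\log(1+\delta)>0$ for $\delta>0$, which satisfies the same equation $\Box f_\delta=-|\nabla f_\delta|^2$ and has $|\nabla f_\delta|=|\nabla f|$; letting $\delta\to 0$ recovers the estimate for $f$.

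Next I would carry out the Bochner computation. Set $P\equiv B/f$. Applying Lemma \ref{l:rf_bochner} to $f$ gives $\Box B=-2|\nabla^2 f|^2-2\langle\nabla f,\nabla B\rangle$; feeding this, together with $\Box f=-B$ and the product rule $\Box(\alpha\beta)=(\Box\alpha)\beta+\alpha\,\Box\beta-2\langle\nabla\alpha,\nabla\beta\rangle$, into $\Box(B\cdot f^{-1})$, I expect to obtain after bookkeeping
\[
\Box P=-\frac{2|\nabla^2 f|^2}{f}-P^2-2\Bigl(1-\frac1f\Bigr)\langle\nabla f,\nabla P\rangle .
\]
This is precisely the shape a maximum principle wants: the reaction term $-P^2$ has the good sign, the curvature-free Hessian term is nonpositive and will simply be dropped, and the only first-order term is a multiple of $\langle\nabla f,\nabla P\rangle$ and hence is harmless wherever $\nabla P=0$.

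Then I would conclude by the maximum principle. With $F\equiv tP$, using $t\langle\nabla f,\nabla P\rangle=\langle\nabla f,\nabla F\rangle$ and $\Box F=P+t\,\Box P$, the identity above yields
\[
\Box F+2\Bigl(1-\frac1f\Bigr)\langle\nabla f,\nabla F\rangle\;\le\;P\,(1-F)\qquad\text{on }M\times(0,t_2].
\]
Since $F\to 0$ as $t\to 0$, if $\sup F>1$ then (for $M$ compact) it is attained at some $(x_0,t_0)$ with $t_0>0$, where $\nabla F=0$ and $\Box F\ge 0$, forcing $0\le P(1-F)$ there; as $P\ge 0$ and $P=0$ implies $F=0$, this is a contradiction. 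Hence $tB/f=F\le 1$, which in particular gives the asserted bound $|\nabla\sqrt f|\le(t-t_1)^{-1/2}$ (indeed with a little room to spare). For noncompact $M$ one repeats the argument against a spatial cutoff supported in a large parabolic ball about a basepoint, absorbing the commutator terms, and then lets the ball exhaust $M$ — a routine modification given the bounded-geometry hypothesis.

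The step I expect to demand the most care is the algebra producing the displayed formula for $\Box P$ — in particular checking that all the first-order terms assemble into the single expression $-2(1-f^{-1})\langle\nabla f,\nabla P\rangle$ — together with, in the noncompact case, the bookkeeping for the cutoff. Beyond that there is no genuine analytic obstacle once the scale-correct test quantity $t|\nabla f|^2/f$ has been identified and Lemma \ref{l:rf_bochner} is used in place of the static Bochner formula.
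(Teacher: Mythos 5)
Your proposal is correct: the identity $\Box P=-\tfrac{2|\nabla^2 f|^2}{f}-P^2-2\bigl(1-\tfrac1f\bigr)\langle\nabla f,\nabla P\rangle$ checks out, and the maximum-principle step with $F=tP$ (plus the $f_\delta$ regularization) yields $t|\nabla f|^2\le f$, which is even sharper than \eqref{e:zhang_grad_est}. The paper itself gives no proof of Theorem \ref{t:zhang_grad_est} --- it quotes the result from \cite{zhang_gradient} and merely notes that the curvature-free Bochner formula of Lemma \ref{l:rf_bochner} is the key tool --- and your argument is exactly that Hamilton-style proof, so it matches the approach the paper points to.
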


The following corollary was not stated in \cite{zhang_gradient} but will be useful for us in Section \ref{s:lipschitz_nash}.

\begin{corollary}\label{c:hk_grad_est}
For each $C > 0$ there exists a $C' = C'(n,C) > 0$ such that if we have
\begin{align}
R[g(-s)] \geq -\frac{C}{|s|},\;\,\inf_{\tau \in (0,2|s|)} \mu(g(s),\tau) \geq -C,
\end{align}
and if we write $H(x,0\,|\,y,s) = (4\pi|s|)^{-\frac{n}{2}}\exp(-f_{x}(y,s))$ as before, then
\begin{align}
 |\nabla_x f_{x}|^2 \leq \frac{C'}{|s|}(C' + f_{x}).
\end{align}
\end{corollary}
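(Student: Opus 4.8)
The plan is to regard, for fixed $(y,s)$, the function $u(x,t)\equiv H(x,t\,|\,y,s)$ as a smooth positive solution of the forward heat equation $\partial_t u=\Delta_{x,g(t)}u$ on $M\times(s,0]$ --- this is Lemma \ref{l:hk_properties}(2), which identifies $H(x,t\,|\,y,s)$ as the fundamental solution of $\Box_{x,t}$ with pole at $(y,s)$ --- and to feed $u$ into Zhang's spatial gradient estimate (Theorem \ref{t:zhang_grad_est}), using the $L^\infty$ heat kernel bound (Theorem \ref{t:upper_rf_heat_kernel}) to control the supremum that appears there. When Theorem \ref{t:upper_rf_heat_kernel} is applied to the flow restricted to $[s,0]$, its quantities $\rho$ and $\mu$ become $\|R[g(s)]^-\|_\infty$ and $\inf_{\tau\in(0,2|s|)}\mu(g(s),\tau)$, so the hypotheses of the corollary give $\rho|s|\le C$ and $-\mu\le C$; Theorem \ref{t:zhang_grad_est} itself requires no curvature hypothesis.

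First I would fix the intermediate time $t_1\equiv s/2\in(s,0)$ and bound $\Lambda\equiv\sup_{M\times[t_1,0]}u$. Applying Theorem \ref{t:upper_rf_heat_kernel} to $u$ on $[s,0]$ --- with a routine $\epsilon$-regularization at the base time, the relevant $L^1$-mass being controlled by Lemma \ref{l:hk_properties}(3) --- yields, for every $t\in[t_1,0]$,
\[
\sup_M u(\cdot,t)\;\le\;(4\pi(t-s))^{-\frac n2}e^{\rho(t-s)-\mu}\;\le\;(2\pi|s|)^{-\frac n2}e^{2C},
\]
since $t-s\ge|s|/2$. Hence $\Lambda\le c_0(n,C)\,|s|^{-\frac n2}$, and since $H(x,0\,|\,y,s)=(4\pi|s|)^{-\frac n2}e^{-f_x(y,s)}$ this translates into
\[
0\;\le\;\log\frac{\Lambda}{u(x,0)}\;\le\;f_x(y,s)+c_1(n,C),
\]
the $|s|$-dependence cancelling in the upper bound; in particular $f_x(y,s)\ge-c_1$.

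Next I would apply Theorem \ref{t:zhang_grad_est} to $u$ on $[t_1,0]$, which is legitimate since $u$ is smooth and positive there; at the final time slice, with the $g(0)$-gradient, it reads $\bigl|\nabla_x\sqrt{\log(\Lambda/u(x,0))}\bigr|\le(0-t_1)^{-\frac12}=\sqrt{2/|s|}$. Since $\log(\Lambda/u(x,0))$ and $f_x(y,s)$ differ by an $x$-independent constant, $\nabla_x f_x(y,s)=\nabla_x\log(\Lambda/u(x,0))$, so, abbreviating $v\equiv\log(\Lambda/u(\cdot,0))\ge0$,
\[
|\nabla_x f_x(y,s)|^2=|\nabla_x v|^2=4\,v\,\bigl|\nabla_x\sqrt{v}\bigr|^2\;\le\;\frac{8}{|s|}\,v\;\le\;\frac{8}{|s|}\bigl(f_x(y,s)+c_1(n,C)\bigr),
\]
using the two-sided bound on $v$ from the previous step. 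Choosing $C'=C'(n,C)$ large enough to absorb $8$ and $c_1$ gives the asserted inequality.

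The step I expect to demand the most care is the bookkeeping around the supremum in Theorem \ref{t:zhang_grad_est}: because the conjugate heat kernel blows up as $t\downarrow s$, the estimate cannot be applied on $[s,0]$ directly, which forces the intermediate time $t_1$ and the need to control $\sup_M H(\cdot,t\,|\,y,s)$ for $t\in[t_1,0]$ via Theorem \ref{t:upper_rf_heat_kernel}, together with a careful check that the quantities $\rho,\mu$ entering that theorem translate correctly to the stated $s$-time hypotheses (the monotonicity of $\inf R$ and of $\mu(g(\cdot),\cdot)$ along the flow handles the shift introduced by the $\epsilon$-regularization). All of this is routine given the results quoted above.
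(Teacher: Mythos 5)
Your argument is correct and follows essentially the same route as the paper: view $H(x,t\,|\,y,s)$ as a forward heat solution via Lemma \ref{l:hk_properties}(2), bound $\sup_{[\frac{s}{2},0]\times M}u$ by Theorem \ref{t:upper_rf_heat_kernel} applied on $[s+\epsilon,0]$, and then apply Theorem \ref{t:zhang_grad_est} on $[\frac{s}{2},0]$ at $t=0$. Your write-up merely spells out the hypothesis transfer and the conversion of $\log(\bar u/u)$ into $C'+f_x$, which the paper leaves implicit.
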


\begin{proof}
Fix $y,s$ and let $u(x,t) \equiv H(x,t\,|\,y,s)$, so that $u$ solves the heat equation  by Lemma \ref{l:hk_properties}. We can apply Theorem \ref{t:upper_rf_heat_kernel} with $T = |s|$ and $[t_1,t_2] = [s + \epsilon, 0]$ for $\epsilon \to 0$ to conclude that
$$\bar{u} \equiv \sup_{[\frac{s}{2}, 0] \times M} u \leq C'|s|^{-\frac{n}{2}}.$$
On the other hand, Theorem \ref{t:zhang_grad_est} with $[t_1,t_2] = [\frac{s}{2}, 0]$ yields $|\nabla_xf_x|^2 \leq \frac{C'}{|s|} \log \frac{\bar{u}}{u}$ at  $(x,0)$.
\end{proof}

Finally, in \cite{zhang2}, Perelman's Harnack inequality from \cite{P} is used to prove a Gaussian lower bound for $H$ in terms of distance in the final time slice, by bringing in Theorems \ref{t:zhang_grad_est} and \ref{t:upper_rf_heat_kernel}.

\begin{theorem}\label{t:hk_lower_bound}
Define $\rho,\mu$ as in \eqref{e:upper_hk_aux}
and write $\tau \equiv t - s$ for $s < t$ in $[-T,0]$. Then
\begin{equation}\label{e:hk_lower_bound}
H(x,t\,|\,y,s) \geq  (8\pi\tau)^{-\frac{n}{2}}\exp\left(-\frac{4}{\tau}d_{g(t)}(x,y)^2 - \frac{1}{\sqrt{\tau}}\int_0^\tau \sqrt{\sigma}R(y,t-\sigma)\,d\sigma - \rho\tau + \mu\right).
\end{equation}
\end{theorem}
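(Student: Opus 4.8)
The plan is to reproduce Zhang's argument \cite{zhang2} by combining three ingredients: Perelman's differential Harnack inequality for the conjugate heat kernel, Zhang's spatial gradient estimate (Theorem \ref{t:zhang_grad_est}), and the uniform on-diagonal bound of Theorem \ref{t:upper_rf_heat_kernel}. The first move is a change of viewpoint: by Lemma \ref{l:hk_properties}(2) the function $u(z,\theta)\equiv H(z,\theta\,|\,y,s)$ solves the \emph{forward} heat equation $\partial_\theta u = \Delta_{g(\theta)}u$ on $M\times(s,0]$ with $u(\cdot,\theta)\to\delta_y$ as $\theta\to s^+$, so the gradient estimate and the upper bound both apply to it. We then proceed in three steps: (i) an on-diagonal lower bound for $u(y,t) = H(y,t\,|\,y,s)$; (ii) propagation to the off-diagonal point $x$ via the gradient estimate, at the cost of $\exp(-4\,d_{g(t)}(x,y)^2/\tau)$; (iii) absorption of the auxiliary supremum of $u$ that appears, using Theorem \ref{t:upper_rf_heat_kernel}.

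For step (i), consider the $\mathcal L$-length functional based at $(y,t)$, namely $\mathcal L(\gamma) = \int_0^{\bar\tau}\sqrt\sigma\,(R(\gamma(\sigma),t-\sigma) + |\dot\gamma(\sigma)|^2_{g(t-\sigma)})\,d\sigma$ over curves with $\gamma(0) = y$, together with the associated reduced distance $\ell(\cdot,\bar\tau)$. Perelman's Harnack inequality (\cite{P}, as used in \cite{zhang2}) shows that $(4\pi\bar\tau)^{-n/2}e^{-\ell}$ is a subsolution of $\Box^*$ with the correct singular behavior as $\bar\tau\to 0$, whence $H(y,t\,|\,z,s)\geq (4\pi\tau)^{-n/2}e^{-\ell(z,\tau)}$. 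Evaluating at $z = y$ and bounding $\ell(y,\tau)$ by the value of $\mathcal L$ on the \emph{constant} curve $\gamma\equiv y$, which contributes no kinetic term, yields
\begin{align*}
H(y,t\,|\,y,s)\;\geq\;(4\pi\tau)^{-\frac n2}\exp\!\left(-\frac{1}{2\sqrt\tau}\int_0^\tau\sqrt\sigma\,R(y,t-\sigma)\,d\sigma\right).
\end{align*}
This is exactly the source of the scalar curvature integral in \eqref{e:hk_lower_bound}.

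For step (ii), apply Theorem \ref{t:zhang_grad_est} to $u$ on the interval $[t_1,t_2] = [\tfrac{s+t}{2},t]$, with $\bar u\equiv \sup_{M\times[\frac{s+t}{2},t]}u$; at time $t$ this gives $|\nabla_z\sqrt{\log(\bar u/u(z,t))}|\leq \sqrt{2/\tau}$. Integrating along a minimizing $g(t)$-geodesic from $y$ to $x$ (which exists by completeness of $g(t)$) and squaring via $(a+b)^2\leq 2a^2+2b^2$ gives $\log(\bar u/u(x,t))\leq 2\log(\bar u/u(y,t)) + 4\,d_{g(t)}(x,y)^2/\tau$, i.e.
\begin{align*}
H(x,t\,|\,y,s)\;=\;u(x,t)\;\geq\;\bar u^{-1}\,u(y,t)^2\,\exp\!\left(-\frac{4\,d_{g(t)}(x,y)^2}{\tau}\right).
\end{align*}
For step (iii), Theorem \ref{t:upper_rf_heat_kernel} applied to $u$ on $[s+\varepsilon,t]$, with $\|u(s+\varepsilon)\|_1\to 1$ as $\varepsilon\to 0$ (Lemma \ref{l:hk_properties}(3)) and $\rho,\mu$ as in \eqref{e:upper_hk_aux}, gives $\|u(\theta)\|_\infty\leq (4\pi(\theta-s))^{-n/2}e^{\rho(\theta-s)-\mu}$; since $\theta-s\geq\tau/2$ on our interval and $\rho\geq 0$, this yields $\bar u\leq (2\pi\tau)^{-n/2}e^{\rho\tau-\mu}$. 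Substituting the on-diagonal bound and this estimate for $\bar u$ into the previous display and using the identity $(2\pi)^{n/2}(4\pi)^{-n} = (8\pi)^{-n/2}$ produces exactly \eqref{e:hk_lower_bound}.

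The main obstacle is step (i): one must invoke Perelman's Harnack inequality in the sharp form comparing $H$ with $(4\pi\tau)^{-n/2}e^{-\ell}$ and recognize that the constant curve is the right competitor for $\ell$, extracting precisely $\tfrac{1}{2\sqrt\tau}\int_0^\tau\sqrt\sigma\,R(y,t-\sigma)\,d\sigma$. Everything else is bookkeeping, but it is tight: the exponent $2$ in $(a+b)^2\leq 2a^2+2b^2$ forces the scalar curvature integral to appear with coefficient $\tfrac1{\sqrt\tau}$, and the choice of midpoint time $\tfrac{s+t}{2}$ is then exactly what makes the distance term carry the coefficient $\tfrac4\tau$; with these choices the powers of $4\pi$ collapse into the stated $8\pi$.
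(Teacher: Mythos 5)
Your argument is correct and follows exactly the route the paper attributes to Zhang \cite{zhang2} for this quoted background result: Perelman's Harnack bound $H\geq(4\pi\tau)^{-\frac n2}e^{-\ell}$ with the constant curve as competitor, Theorem \ref{t:zhang_grad_est} applied on $[\frac{s+t}{2},t]$, and Theorem \ref{t:upper_rf_heat_kernel} to control $\bar u$, with the constants $(8\pi\tau)^{-\frac n2}$, $\frac4\tau$, $\frac1{\sqrt\tau}$ coming out exactly as stated.
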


Notice that (\ref{e:hk_lower_bound}) involves the $g(t)$-distance because Theorem \ref{t:zhang_grad_est} bounds the $x$-gradient.

\section{Log-Sobolev and Gaussian concentration}

Section \ref{s:proof_ineqs} proves Theorem \ref{t:ineqs} using methods in the spirit of the papers \cite{BL, B}, which deal with the analogous problem for static Riemannian manifolds with $\Ric \geq 0$. In fact, this proof is not far removed from the usual proof of a log-Sobolev under the Bakry-{\'E}mery condition (\ref{e:BE}).  Section \ref{s:concentration} deduces the Gaussian concentration (Theorem \ref{t:concentration}), using a standard argument from the theory of log-Sobolev inequalities. Corollary \ref{c:gaussian_integral_upper} is then deduced as a consequence.

\subsection{Proof of the Poincar{\'e} and log-Sobolev inequalities}\label{s:proof_ineqs}

The starting point is to rewrite our two inequalities in a more convenient way. Writing $d\nu = d\nu_{x_0}(s)$ and passing to square roots in the log-Sobolev, we need to prove that for all $u \in C^\infty_0(M)$, with $u \geq 0$ in the second case,
\begin{align}\label{e:poinc_rewrite}
\int u^2 \,d\nu - \left(\int u\,d\nu\right)^2&\leq 2|s|\int |\nabla u|^2\,d\nu,\\
\label{e:logsob_rewrite}\int u\log u \,d\nu - \left(\int u \,d\nu\right)\log\left(\int u \,d\nu\right) &\leq |s|\int \frac{|\nabla u|^2}{u}\,d\nu.
\end{align}
Note that (\ref{e:logsob_rewrite}) in fact implies (\ref{e:poinc_rewrite}) by linearizing around $u \equiv 1$, but information about the equality case is lost in this way. However, we will prove (\ref{e:poinc_rewrite}) and (\ref{e:logsob_rewrite}) completely in parallel, with almost identical discussions of the respective equality cases.

The key insight, not unlike \cite{BE}, is that the heat kernel provides a homotopy between
the two terms that are being subtracted on the left-hand sides of (\ref{e:poinc_rewrite}), (\ref{e:logsob_rewrite}). The proof then reduces to deriving
a gradient estimate for the forward heat equation via the Bochner formula (Lemma \ref{l:rf_bochner}).

 In \cite{B}, this is done (in the static case) by bounding
the Hodge heat kernel on $1$-forms in terms of
the heat kernel on scalars using a Kato type inequality; unfortunately, see \cite{HSU}, this method loses a
factor of $n = {\rm rank}\;T^*M$, which is not accounted for in \cite{B}. While the same idea works for the Ricci flow, we will therefore instead follow in spirit the approach of \cite{BL}, where a precise gradient estimate is  obtained (in the static case) by applying the heat kernel homotopy
principle \emph{once again}.

For $s \leq t$ in $[-T,0]$, we write $P_{st}u$ for the evolution of $u \in C^\infty_0(M)$ from time $s$ to time $t$ under the forward heat equation coupled to the Ricci flow. In other words, by Lemma \ref{l:hk_properties},
\begin{equation}\label{e:hkrep}
(P_{st}u)(x) = \int u(y) H(x,t\,|\,y, s) \, d\vol_{g(s)}(y).
\end{equation}
Given this, the following lemma records the key homotopy principle.

\begin{lemma}\label{l:homotopy}
{\rm (1)} For any family of smooth functions $U_t$ parametrized by $t \in [-T,0]$,
\begin{equation}\label{e:homotopy}
\frac{d}{dt} P_{t0}U_t = P_{t0}\Box_tU_t.
\end{equation}

{\rm (2)} Let $u \in C^\infty_0(M)$ and put $u_t = P_{st}u$, so that $\Box_t u_t = 0$. Fix
$\phi,\psi: \R \to \R$. Then we have
\begin{align}
\label{e:commutator1}U_t \equiv \phi(u_t) &\Longrightarrow \Box U = - \phi''(u)|\nabla u|^2,\\
\label{e:commutator2}U_t \equiv \psi(u_t)|\nabla u_t|_{g(t)}^2 &\Longrightarrow \Box U = -2\psi(u)|\nabla^2u|^2 - 4\psi'(u)\langle \nabla^2u, du \otimes du\rangle - \psi''(u)|d u \otimes d u|^2,
\end{align}
omitting all subscripts $t$ and $g(t)$ on the right-hand side.
\end{lemma}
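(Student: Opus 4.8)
\emph{Part (1)} will follow by differentiating under the integral sign in the kernel representation \eqref{e:hkrep}. Writing $(P_{t0}V)(x) = \int V(y)\,H(x,0\,|\,y,t)\,d\vol_{g(t)}(y)$, there are exactly three sources of $t$-dependence on the right: the explicit family $U_t$; the conjugate heat kernel, which as a function of $(y,t)$ solves $\Box^*_{y,t}H(x,0\,|\,y,t) = (-\partial_t - \Delta_{y,g(t)} + R(y,t))H = 0$ by Definition \ref{d:conjugate_hk} — note that in $P_{t0}$ the variable $t$ plays the role of the \emph{earlier} time, so it is the conjugate equation in the $(y,t)$ slot that is relevant; and the Riemannian measure, which evolves by $\partial_t\,d\vol_{g(t)} = -R\,d\vol_{g(t)}$ under the Ricci flow. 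Since $U_t$ is compactly supported and $H(x,0\,|\,y,t)$ is smooth for $t<0$, differentiation under the integral is legitimate, and upon substituting $\partial_t H = -\Delta_{y,g(t)} H + RH$ the two curvature contributions $U_t R H$ and $-U_t R H$ cancel. A single integration by parts then transfers $\Delta_{y,g(t)}$ from $H$ onto $U_t$, leaving $\int(\partial_t U_t - \Delta_{g(t)} U_t)\,H\,d\vol_{g(t)}(y) = (P_{t0}\Box_t U_t)(x)$, which is \eqref{e:homotopy}.

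\emph{Part (2)} consists of two pointwise identities whose only structural input is $\Box_t u_t = 0$. For \eqref{e:commutator1}, I would expand
\[
\Box\,\phi(u) = \phi'(u)\,\partial_t u - \phi'(u)\,\Delta u - \phi''(u)|\nabla u|^2 = \phi'(u)\,\Box u - \phi''(u)|\nabla u|^2
\]
and discard the vanishing first term. For \eqref{e:commutator2}, write $U \equiv \psi(u)\,w$ with $w \equiv |\nabla u|^2_{g(t)}$ and use the Leibniz identity $\Box(ab) = (\Box a)b + a(\Box b) - 2\langle\nabla a,\nabla b\rangle$: here $\Box\,\psi(u) = -\psi''(u)\,w$ by \eqref{e:commutator1}, and $\Box\,w = -2|\nabla^2 u|^2$ by the parabolic Bochner formula (Lemma \ref{l:rf_bochner}) combined with $\Box u = 0$ — this is the one point where the geometry genuinely enters, namely the cancellation of the Ricci term effected by the flow. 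The remaining work is the two tensor identities $\langle\nabla\psi(u),\nabla w\rangle = 2\psi'(u)\langle\nabla^2 u, du\otimes du\rangle$ (from $\nabla_i|\nabla u|^2 = 2u_{ij}u^j$) and $w^2 = |du\otimes du|^2$, after which the three surviving terms reassemble into exactly \eqref{e:commutator2}.

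Everything here is routine; the one step needing genuine care is tracking, in Part (1), which heat equation $H(x,0\,|\,y,t)$ satisfies as $t$ varies, together with the $-R\,d\vol$ term coming from the evolving volume form. Dropping the latter would break the identity, and it is precisely its cancellation against the curvature term in the conjugate heat equation that makes \eqref{e:homotopy} come out clean. Part (2) presents no real obstacle beyond correctly invoking Lemma \ref{l:rf_bochner} and the standard first- and second-derivative identities for $|\nabla u|^2$.
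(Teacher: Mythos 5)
Your proposal is correct and follows the same route as the paper's (very terse) proof: differentiation under the integral in \eqref{e:hkrep} using the conjugate heat equation for $H$ in the $(y,t)$ slot together with $\partial_t\,d\vol=-R\,d\vol$ for Part (1), and the chain/Leibniz rules for $\Box$ plus the parabolic Bochner formula of Lemma \ref{l:rf_bochner} for Part (2). All the details you supply (the cancellation of the $RH$ term against the volume-form term, $\Box|\nabla u|^2=-2|\nabla^2u|^2$, and the identities $\nabla_i|\nabla u|^2=2u_{ij}u^j$ and $|du\otimes du|^2=|\nabla u|^4$) check out.
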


\begin{proof}
(1) The representation formula (\ref{e:hkrep}) yields
\begin{align*}
 \frac{d}{dt} P_{t0}U_t(x_0) = \frac{d}{dt} \int U_t(x) H(x_0,0\,|\,x, t) \, d\vol_{g(t)}(x).
\end{align*}
Now use that $H(x_0, 0\,|\,x, t)$ solves the conjugate heat equation in $(x,t)$ and $\frac{\partial}{\partial t} d\vol = -R\,d\vol$.

(2) The first claim is straightforward. The key to the second claim is Lemma \ref{l:rf_bochner}.
\end{proof}

Combining (\ref{e:homotopy}), (\ref{e:commutator1}), we can now rewrite the left-hand sides of (\ref{e:poinc_rewrite}), (\ref{e:logsob_rewrite}) as follows:
\begin{align}\label{e:heat_homotopy}
\int \phi(u) \,d\nu - \phi\left(\int u\,d\nu\right) &= -\int_s^0 \frac{d}{dt} P_{t0}(\phi(P_{st}u))(x_0) \,dt \notag\\
&=  \int_s^0 P_{t0}(\phi''(P_{st}u) |\nabla P_{st}u|^2_{g(t)})(x_0) \, dt,
\end{align}
with $\phi(x) = x^2$ and $\phi(x) = x\log x$, respectively.
We next estimate the integrand by combining (\ref{e:homotopy}),  (\ref{e:commutator2}) with $\psi = \phi''$, replacing $t,0$ by $r,t$ for a new variable $r \in [s,t]$. Thus, in the $x^2$ case,
\begin{align}
|\nabla P_{st}u|^2_{g(t)} = P_{st}(|\nabla u|^2_{g(s)}) - 2\int_s^t P_{rt}|\nabla^2 P_{sr}u |_{g(r)}^2\,dr.
\end{align}
Substituting this into (\ref{e:heat_homotopy}) immediately yields the Poincar{\'e} inequality of Theorem \ref{t:ineqs}(1).
In the $x\log x$ case, (\ref{e:commutator2}) simplifies quite drastically and we obtain
\begin{align}
\frac{|\nabla P_{st}u|^2_{g(t)}}{P_{st}u} = P_{st}\left(\frac{|\nabla u|^2_{g(s)}}{u}\right) - 2\int_s^t P_{rt}((P_{sr}u) |\nabla^2 \log P_{sr}u |_{g(r)}^2) \, dr\, .
\end{align}
Again we substitute this into (\ref{e:heat_homotopy}) to prove the required log-Sobolev inequality.  We can then finish the proof of Theorem \ref{t:ineqs} by observing that equality can occur if and only if either $u = const$, or the flow lines of $\nabla u$ or $\nabla \log u$ split off as isometric $\R$-factors.

\subsection{Gaussian concentration}\label{s:concentration}

Given the inequalities in Theorem \ref{t:ineqs}, proving the concentration estimate claimed in Theorem \ref{t:concentration} is by now a standard exercise in abstract metric measure theory: log-Sobolev inequalities yield Gaussian concentration (Herbst), and  Poincar{\'e} inequalities, which are weaker, still yield exponential concentration (Gromov-Milman). We refer to Ledoux \cite{L} for a good exposition; the following proof merely recalls the relevant points from \cite{L}.

\begin{proof}[Proof of Theorem \ref{t:concentration}]
We proceed from the log-Sobolev in the version (\ref{e:logsob}). The first idea is that this allows us to bound, in a specific fashion, the Laplace transform of $1$-Lipschitz functions with zero average. The second idea is to apply this bound to the distance function from a set.

As for the Laplace transform bound, fix $F \in C^\infty(M)$ with
$$
\int F \,d\nu = 0,\;\,|\nabla F| \leq 1.
$$
Let us define a Laplace type transform, or moment generating function, by
$$
U(\lambda) \equiv \frac{1}{\lambda} \log \int e^{\lambda F}\,d\nu.
$$
This is $O(\lambda)$ as $\lambda \to 0$. Moreover, (\ref{e:logsob}) applied to $\phi^2 = e^{\lambda F}/\int e^{\lambda F}\,d\nu$ easily yields
$$
\frac{dU}{d\lambda} \leq |s|
$$
for all $\lambda > 0$. Thus, altogether,
$$
\int e^{\lambda F}\,d\nu \leq e^{|s|\lambda^2}.
$$

We now apply the preceding inequality to the two test functions
$$
F \equiv \pm(G - \int G\,d\nu),\;\, G(y) \equiv {\rm dist}(y,B).
$$
Then we immediately obtain that
$$
e^{\lambda {\rm dist}(A,B)}\nu(A)\nu(B) \leq \int_A\int_B e^{\lambda(F(y_1) - F(y_2))} \, d\nu(y_1)\,d\nu(y_2) \leq e^{2|s|\lambda^2},
$$
and the theorem follows from this by optimizing in $\lambda$.
\end{proof}

\begin{proof}[Proof of Corollary \ref{c:gaussian_integral_upper}]
We begin by applying Theorem \ref{t:concentration} with $x_0$ replaced by $x_2$ and
$$
A = B_{r}(x_1, 0),\;\,B = B_{r}(x_2, 0).
$$
In order to derive (\ref{e:uppergauss2}) we must bound the factor $\nu(B)$ on the left-hand side of (\ref{e:concentration}) from below.
This can be done using two ingredients. First, using Zhang's Theorem \ref{t:hk_lower_bound},
$$\inf_{B_r(x_2,0)} H_{x_2}(s) \geq \frac{1}{C'}|s|^{-\frac{n}{2}}.$$
Second, the evolution of the volume form under Ricci flow and Theorem \ref{t:stronger_nlc} tell us that
$$\Vol_{g(s)}(B_r(x_2,0)) \geq \frac{1}{C'}\Vol_{g(0)}(B_r(x_2,0)) \geq \frac{1}{C'}r^n.$$
Together these show that $\nu(B) \geq \frac{1}{C'}$. Then (\ref{e:uppergauss2}) follows by dividing through by $\Vol_{g(s)}(B_r(x_1,0))$, which we can bound from below by the same argument we just used for
$\Vol_{g(s)}(B_r(x_2,0))$.

Finally,  Theorem \ref{t:hk_lower_bound}  also tells us that
$$\inf_{B_r(x_1,0)} H_{x_2}(s) \geq \frac{1}{C'}|s|^{-\frac{n}{2}}\exp\left(-\frac{C'}{|s|}d_{g(0)}(x_1,x_2)^2\right),$$
which allows us to deduce (\ref{e:distance_distortion}) from (\ref{e:uppergauss2}).
\end{proof}

\section{Lipschitz continuity of the pointed Nash entropy}\label{s:lipschitz_nash}

We now prove Theorem \ref{t:ent_cont} as a consequence of Corollary \ref{c:hk_grad_est} and our weighted Poincar{\'e} (\ref{e:poinc}). The standing assumption throughout this section is that $(M^n, g(t))$ is a Ricci flow parametrized by $t \in [-T,0]$ satisfying (\ref{e:gen_assns_2}). In order to prove that the mapping $x \mapsto f_x(s)H_x(s)$ from $(M, g(0))$ to $L^1(M, d\vol_{g(s)})$ is Lipschitz, clearly all we need to do is estimate the integral
\begin{align}
\cI \equiv \int |\nabla_x (f_x(y,s) H_x(y,s))| \,d\vol_{g(s)}(y).
\end{align}
Inserting the expression for $H_x$ in terms of $f_x$, and writing $d\nu \equiv d\nu_x(s)$, we find that
\begin{align}\label{e:n1}
\cI = \int |\nabla_xf_x-f_x\nabla_x f_x| \, d\nu \leq \|\nabla_x f_x\|_2 (1 + \|f_x\|_2),
\end{align}
where the subscript $2$ indicates the $L^2$-norm with respect to the probability measure $\nu$.
Now using Corollary \ref{c:hk_grad_est} we have the pointwise estimate
\begin{align}
|\nabla_x f_x|^2\leq \frac{C'}{|s|}(C'+f_x).
\end{align}
If we substitute this into (\ref{e:n1}) we therefore get
\begin{align}\label{e:n2}
\cI \leq C'|s|^{-\frac{1}{2}}\left(1+\int |f_x|^2\,d\nu\right).
\end{align}

To deal with this term we need the Poincar{\'e} inequality (\ref{e:poinc}) and some simple observations based on monotonicity of the Nash entropy. Precisely, the following lets us complete the proof.

\begin{theorem}\label{t:hk_upper_grad_int}
Under \eqref{e:gen_assns_2} the following hold.
\begin{enumerate}
\item  $\int f_x \, d\nu \in [\frac{n}{2} -C, \frac{n}{2}]$.
\item  $\int |\nabla f_x|^2\, d\nu\leq (\frac{n}{2} + C)\frac{1}{|s|}$.
\item  $\int |f_x|^2 \, d\nu \leq 2(n + 2C)^2$.
\end{enumerate}
\end{theorem}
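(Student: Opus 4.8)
The plan is to read off all three estimates from the entropy identities in Proposition~\ref{p:nash_entropy} and the weighted Poincar\'e inequality~\eqref{e:poinc}; no genuinely new analytic input is required. Throughout I write $d\nu \equiv d\nu_x(s)$ and $\bar f \equiv \int f_x(s)\,d\nu$, so that the claims read $\bar f \in [\tfrac n2 - C, \tfrac n2]$, $\int |\nabla f_x|^2\,d\nu \le (\tfrac n2 + C)|s|^{-1}$, and $\int f_x^2\,d\nu \le 2(n+2C)^2$.

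First I would dispatch (1) and (2). By Proposition~\ref{p:nash_entropy}(3), $\bar f = \cN_x(s) + \tfrac n2$, and by Proposition~\ref{p:nash_entropy}(1), $\cW_x(s) \le \cN_x(s) \le 0$; the upper inequality already gives $\bar f \le \tfrac n2$, while the definition of $\mu$ together with~\eqref{e:gen_assns_2} forces
\[
\cW_x(s) = \cW(g(s), f_x(s), |s|) \ge \mu(g(s), |s|) \ge \inf_{\tau \in (0,2|s|)}\mu(g(s),\tau) \ge -C,
\]
hence $\cN_x(s) \ge -C$ and $\bar f \ge \tfrac n2 - C$. For (2), I would expand the definition of $\cW_x(s)$ with $\tau = |s|$ and substitute $\int f_x(s)\,d\nu = \cN_x(s) + \tfrac n2$ to obtain the exact identity
\[
|s|\int |\nabla f_x|^2\, d\nu = \cW_x(s) - \cN_x(s) - |s|\int R\, d\nu + \tfrac n2 .
\]
Here $\cW_x(s) - \cN_x(s) \le 0$ by Proposition~\ref{p:nash_entropy}(1) and $-|s|\int R\,d\nu \le C$ by the scalar lower bound in~\eqref{e:gen_assns_2}, so the right-hand side is at most $\tfrac n2 + C$; dividing by $|s|$ gives the claim. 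It is important to keep the curvature term and the entropy defect $\cW_x(s)-\cN_x(s)$ together rather than bounding $\int |\nabla f_x|^2$, $\int R$, and $\bar f$ separately, as the latter would cost an extra factor of $C$ and miss the stated constant.

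For (3) I would apply~\eqref{e:poinc} to the $\nu$-mean-zero function $f_x(s) - \bar f$. When $M$ is noncompact this needs a routine truncation: $f_x(s)$ is bounded below (by the heat kernel upper bound used in the proof of Corollary~\ref{c:hk_grad_est}) and grows at most quadratically in $d_{g(0)}(x,\cdot)$ (by Theorem~\ref{t:hk_lower_bound}), while $d\nu$ decays Gaussianly, so testing~\eqref{e:poinc} against $\eta_R(f_x(s)-\bar f)$ and letting $R\to\infty$ is legitimate. This yields $\int (f_x - \bar f)^2\, d\nu \le 2|s|\int |\nabla f_x|^2\, d\nu \le n + 2C$ by part (2). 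Since $\int (f_x - \bar f)\,d\nu = 0$,
\[
\int f_x^2\, d\nu = \int (f_x - \bar f)^2\, d\nu + \bar f^2 \le (n + 2C) + \Bigl(\tfrac n2 + C\Bigr)^2 ,
\]
using $|\bar f| \le \tfrac n2 + C$ from part (1). Finally $n \ge 1$ gives $n + 2C \le (n+2C)^2$ and $(\tfrac n2 + C)^2 = \tfrac14 (n+2C)^2$, so the right-hand side is $\le \tfrac54 (n+2C)^2 \le 2(n+2C)^2$.

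I expect the only step requiring real care to be the truncation argument in (3) that legitimizes inserting the non-compactly-supported function $f_x(s)$ into the Poincar\'e inequality~\eqref{e:poinc}; the remainder is bookkeeping with the identities collected in Section~\ref{s:basics}.
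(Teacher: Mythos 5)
Your proposal is correct and follows essentially the same route as the paper: parts (1) and (2) from the identities $\cN_x(s)=\int f_x\,d\nu-\tfrac n2$, the monotonicity $\cW_x(s)\le\cN_x(s)\le 0$, the bound $\cW_x(s)\ge\mu(g(s),|s|)\ge -C$, and the scalar curvature lower bound, and part (3) from the weighted Poincar\'e inequality \eqref{e:poinc} combined with (1) and (2). Your only addition is the explicit truncation remark justifying the use of the non-compactly-supported $f_x$ in \eqref{e:poinc}, a technical point the paper leaves implicit under its bounded-geometry assumption.
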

Notice carefully that $\nabla f_x$ means $\nabla_y f_x$ here, not $\nabla_x f_x$ as in (\ref{e:n1}).
\begin{proof}[Proof of Theorem \ref{t:hk_upper_grad_int}]
The first statement follows from Propositions \ref{p:ptd_entropy} and \ref{p:nash_entropy}. For the second statement, notice that the inequality
$\cW_x(s)\leq \cN_x(s)$ is equivalent to
$$
\int (|\nabla f_x|^2+R)\, d\nu \leq \frac{n}{2|s|}\, .
$$
Finally, the Poincar\'e inequality of Theorem \ref{t:ineqs} gives us
\begin{align}\label{e:n3}
\int |f_x|^2 \, d\nu \leq 2|s|\int |\nabla f_x|^2 \, d\nu +\left(\int f_x\, d\nu\right)^2,
\end{align}
and the two terms on the right-hand side are bounded by the first two statements.
\end{proof}

\section{Proof of the $\epsilon$-regularity theorem}\label{s:epsilon_regularity}

Throughout this section we are considering a Ricci flow $(M^n,g(t))$ as in (\ref{e:RF}) that satisfies (\ref{e:basic_control}). We begin by proving Proposition \ref{p:eps_regularity_entropy}, which is a more restrictive version of Theorem \ref{t:eps_regularity}, and then use the continuity statement of Theorem \ref{t:ent_cont} to complete the proof of Theorem \ref{t:eps_regularity} in full.

For a point $y\in M$ on the $0$-time slice we define the normalized time-scale
\begin{align}\label{e:norm_time_scale}
t(y) \equiv {-\min}\{T, r_{|\Rm|}(y,0)^2\}.
\end{align}

\begin{proposition}\label{p:eps_regularity_entropy}
There exists an $\epsilon = \epsilon(n,C)>0$ such that if we have
\begin{align}\label{e:eps_assumption_entropy}
\forall y \in B_\delta(x,0): \mathcal{N}_{t(y)}(y) \geq -\epsilon
\end{align}
for some $x \in M$ and $0<\delta \leq \sqrt{T}$, then we also have
\begin{align}
\forall y\in B_\delta(x,0): r_{|\Rm|}(y,0) \geq \epsilon \cdot d_{g(0)}(y,\partial B_\delta(x,0)).
\end{align}
\end{proposition}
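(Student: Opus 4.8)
The plan is to argue by contradiction via a point-picking and blow-up scheme, following Anderson's strategy as outlined in the introduction, with $\cN$ replacing $\cW$ so that Theorem \ref{t:ent_cont} provides the Lipschitz control needed to make point-picking survive a limit. Suppose the statement fails. Then for a sequence $\epsilon_i \to 0$ there exist Ricci flows $(M_i, g_i(t))$ satisfying \eqref{e:basic_control}, points $x_i$, and radii $0 < \delta_i \le \sqrt{T_i}$ so that $\cN_{t(y)}(y) \ge -\epsilon_i$ for all $y \in B_{\delta_i}(x_i,0)$, yet there is some $z_i \in B_{\delta_i}(x_i,0)$ with $r_{|\Rm|}(z_i,0) < \epsilon_i \, d_{g_i(0)}(z_i, \partial B_{\delta_i}(x_i,0))$. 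The first step is a standard point-selection inside $B_{\delta_i}(x_i,0)$: one chooses $y_i$ (roughly) minimizing $r_{|\Rm|}(y,0)$ over a slightly shrunk ball, arranging that $r_{|\Rm|}(y_i,0) = \rho_i$ with $\rho_i/d_{g_i(0)}(y_i,\partial B_{\delta_i}(x_i,0)) \to 0$, and that $r_{|\Rm|}(y,0) \gtrsim \tfrac12 \rho_i$ for all $y$ in the $g_i(0)$-ball of radius, say, $A_i \rho_i$ around $y_i$, where $A_i \to \infty$. Because $\rho_i \ll \delta_i \le \sqrt{T_i}$ we have $t(y_i) = -\rho_i^2$ for $i$ large, so the entropy hypothesis reads $\cN_{y_i}(-\rho_i^2) \ge -\epsilon_i$.

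The second step is to rescale. Set $\tilde g_i(t) = \rho_i^{-2} g_i(\rho_i^2 t)$; then $r_{|\Rm|}(y_i, 0) = 1$ with respect to $\tilde g_i$, and $|\Rm| \le 4$ (say) on the $\tilde g_i(0)$-ball of radius $A_i$ about $y_i$. The scale-invariance of the hypotheses \eqref{e:basic_control} — note the appearance of $|s|$ on the right-hand side of $R \ge -C/|s|$ and of $\tau \in (0,2|s|)$ in the $\mu$-bound — is exactly what makes $C$ unchanged under this parabolic rescaling, and $\cN$ is scale-invariant as well, so $\cN^{\tilde g_i}_{y_i}(-1) \ge -\epsilon_i$. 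Using Perelman's no-local-collapsing (Theorem \ref{t:stronger_nlc}) to get a uniform lower volume bound at $y_i$, together with the local curvature bound on a ball of radius $A_i \to \infty$ and Shi-type derivative estimates (the $\nabla^k \Rm$ bounds recorded after the definition of the regularity scale), one extracts a subsequence converging in $C^\infty_{loc}$ — in the pointed Cheeger--Gromov--Hamilton sense on an appropriate spacetime neighborhood — to a complete pointed Ricci flow $(\tilde M_\infty, \tilde g_\infty(t), y_\infty)$ defined for $t$ in a neighborhood of $0$, with $r_{|\Rm|}(y_\infty, 0) = 1$.

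The third step identifies the limit. By the Lipschitz estimate \eqref{e:nash_ent_cont} of Theorem \ref{t:ent_cont} (applied at the rescaled level, where it says $x \mapsto \cN_x(-1)$ is $C'$-Lipschitz in $d_{\tilde g_i(0)}$, $C'$ uniform in $i$) together with $\cN^{\tilde g_i}_{y_i}(-1) \ge -\epsilon_i$ and $\cN_x \le 0$, we get $\cN^{\tilde g_i}_x(-1) \to 0$ uniformly on bounded sets; passing to the limit, $\cN_{x}^{\tilde g_\infty}(-1) = 0$ for all $x \in \tilde M_\infty$, and then by the monotonicity $\frac{d}{ds}\cN_x = \frac1{|s|}(\cN_x - \cW_x) \ge 0$ together with $\cW_x \le \cN_x \le 0$ (Proposition \ref{p:nash_entropy}) we conclude $\cW_x^{\tilde g_\infty}(s) = 0$ for all $s$. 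By the rigidity in Proposition \ref{p:ptd_entropy}(3) (the integrand $|\Ric + \nabla^2 f_x - \tfrac{g}{2|r|}|^2$ must vanish identically), the limit flow is a gradient shrinking soliton with vanishing soliton tensor, hence — since its pointed entropy is identically zero — isometric to the trivial flow on $\R^n$. But then $r_{|\Rm|}(y_\infty,0) = \infty$, contradicting $r_{|\Rm|}(y_\infty,0) = 1$.

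The main obstacle I expect is the point-picking plus convergence step: one must choose $y_i$ so that the curvature is controlled on a ball whose \emph{rescaled} radius tends to infinity, while simultaneously not leaving $B_{\delta_i}(x_i,0)$ (so that the entropy hypothesis still applies at $y_i$) and keeping the no-local-collapsing constant uniform; this is a somewhat delicate three-way balance, and one has to be careful that the limit Ricci flow is complete and that the convergence is strong enough (smooth on a spacetime neighborhood of the final-time slice, not just on the final slice) to pass $\cN$ and $\cW$ and the regularity scale to the limit. The Lipschitz continuity of $\cN$ — the genuinely new input, supplied by Theorem \ref{t:ent_cont} and ultimately by the weighted Poincar\'e inequality \eqref{e:poinc} — is precisely what lets the smallness $\cN_{y_i}(-\rho_i^2) \ge -\epsilon_i$ propagate to \emph{all} points of the limit rather than just $y_\infty$, which is what forces the limit to be Euclidean.
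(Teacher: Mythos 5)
Your overall skeleton --- argue by contradiction, pick $y_i$ (almost) minimizing $r_{|\Rm|}(y,0)/d_{g(0)}(y,\partial B_\delta)$, rescale parabolically so that $r_{|\Rm|}(\tilde y_i,0)=1$, use no-local-collapsing plus the local curvature bounds to extract a pointed $C^\infty$ limit, and identify the limit as a flat/Gaussian shrinker contradicting $r_{|\Rm|}(\tilde y_\infty,0)=1$ --- is exactly the paper's. But your Step 3 contains a genuine error. The Lipschitz bound \eqref{e:nash_ent_cont} carries a \emph{fixed} constant $C'=C'(n,C)$; at the rescaled scale $s=-1$ it only yields $\cN_x(-1)\ge -\epsilon_i - C'\,d_{\tilde g_i(0)}(x,\tilde y_i)$, which is nowhere near $0$ at unit distances. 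So your claim that $\cN^{\tilde g_i}_x(-1)\to 0$ uniformly on bounded sets is false, and the mechanism you single out as the decisive input (Lipschitz continuity propagating smallness from $\tilde y_i$ to all points of the limit) cannot do what you ask of it. It is also unnecessary here: the hypothesis \eqref{e:eps_assumption_entropy} of this proposition already asserts $\cN_{t(y)}(y)\ge -\epsilon$ at \emph{every} point of the ball at its own regularity scale; in the paper, Theorem \ref{t:ent_cont} enters only afterwards (Theorem \ref{t:eps_regularity_2}), to upgrade the single-point hypothesis of Theorem \ref{t:eps_regularity} to the all-points hypothesis of this proposition. In fact the paper uses even less than the all-points hypothesis: smallness at the basepoint alone suffices, since Proposition \ref{p:nash_entropy}(4) applied at scale $-\tfrac{1}{16}$ in the rescaled flows turns $\cN_{\tilde y_i}(-1)\ge -\tfrac1i$ into the bound $\int_{-1/16}^{0} 2|t|(1-16|t|)\int |\Ric+\nabla^2 f_i - \tfrac{\tilde g_i}{2|t|}|^2\, d\nu_{\tilde y_i}(t)\,dt \le \tfrac1i$, and Fatou then makes the limit a gradient shrinking soliton with singular time $0$; bounded curvature up to $t=0$ forces flatness, contradicting \eqref{e:eps4}.

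A second, related gap: you assert ``passing to the limit, $\cN^{\tilde g_\infty}_x(-1)=0$,'' but the limit flow exists only on a short time interval near $0$ (the regularity bounds \eqref{e:eps3} control curvature only for $t\in(-\tfrac14,0]$), so the entropy at scale $-1$ is not even defined for the limit; moreover, passing $\cN$ or $\cW$ to the limit requires locally uniform two-sided control of the conjugate heat kernels so that $f_i\to f_\infty$ smoothly, which you acknowledge but do not supply. The paper handles both points at once: the two-sided bounds (\ref{e:aux_hk_bounds}) (Perelman's Harnack lower bound and Zhang's upper bound) give smooth local convergence of $f_i$, and the Fatou argument on the nonnegative soliton-defect integrand over $[-\tfrac1{16},0]$ avoids ever passing an entropy at scale $-1$ to the limit. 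With these repairs --- drop the appeal to Theorem \ref{t:ent_cont}, use the hypothesis (or just basepoint smallness) together with Proposition \ref{p:nash_entropy}(4), and insert the heat kernel bounds --- your argument becomes the paper's proof.
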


\begin{proof}
(1) By rescaling there is no harm in assuming $\delta = 1 \leq T$.
Let us assume for some $n,C>0$ that the lemma fails.  In this case we have for all $i\in\dN$ that there exists a complete Ricci flow
$$
(M^n_i,g_i(t),(x_i,0))
$$
with $t\in[-1,0]$ and $x_i\in M_i$ such that for each $y \in B_1(x_i,0)$ we have
\begin{align}\label{e:eps0}
\mathcal{N}_{t(y)}(y) \geq -\frac{1}{i},
\end{align}
whereas any point $y_i\in B_1(x_i,0)$ minimizing the quantity
\begin{align}
w(y)\equiv \frac{r_{|\Rm|}(y,0)}{d_{g_i(0)}(y,\partial B_1(x_i,0))}
\end{align}
must necessarily satisfy
\begin{align}
0<w(y_i)\leq\frac{1}{i}.
\end{align}

(2) Choose any such $y_i$, define $r_i \equiv r_{|\Rm|}(y_i,0)$, and consider the rescaled Ricci flows
$$
(\tilde{M}_i,\tilde g_i(t), (\tilde y_i,0)), \;\, \tilde g_i(t) \equiv \frac{1}{r_i^2}g_i(r_i^2 t), \;\,
t \in [-\frac{1}{r_i^2},0].$$
Certainly $r_{|\Rm|}(\tilde y_i,0)=1$, and $\tilde y_i$ moves away from the boundary in that
\begin{align}\label{e:eps2}
d_i \equiv \frac{1}{2}d_{\tilde g_i(0)}(\tilde y_i,\partial B_{\frac{1}{r_i}}(\tilde x_i,0)) \geq \frac{i}{2}.
\end{align}
On the other hand, since $y_i$ minimizes $w$, we find that
\begin{align}\label{e:eps3}
\tilde y \in B_{d_i}(\tilde y_i, 0) \;\Longrightarrow\; r_{|\Rm|}(\tilde y,0)\geq \frac{1}{2}.
\end{align}
Moreover, by Perelman's no local collapsing, see Theorem \ref{t:stronger_nlc}, we have that
\begin{align}
\Vol_{\tilde g_i(0)}(B_1(\tilde y,0)) \geq\kappa(n).
\end{align}
Thus we have uniform smooth bounds on $P_{1/4}(\tilde y,0)$ for all $\tilde y \in B_{d_i}(\tilde y_i,0)$,
and uniform noncollapsing at scale $1$ on $B_{d_i}(\tilde y_i,0)$. This allows us to pass to a subsequence to derive a pointed $C^\infty$ limit
$$
(\tilde M_i,\tilde g_i(t),(\tilde y_i,0))\rightarrow (\tilde M_\infty, \tilde g_\infty(t),(\tilde y_\infty,0)).
$$
The limit exists for $t \in [-\frac{1}{16}, 0]$ and is complete with bounded curvature while satisfying
\begin{align}\label{e:eps4}
r_{|\Rm|}(\tilde y_\infty,0)=1.
\end{align}

(3) To contradict this with (\ref{e:eps0}) we need a few estimates for the heat kernel. Namely,
\begin{equation}\label{e:aux_hk_bounds}
(4\pi|t|)^{-\frac{n}{2}} \exp(-l^{|t|}_{\tilde y_i}(\tilde y))\leq H(\tilde y_i,0\,|\,\tilde y,t) \leq C'(n,C)|t|^{-\frac{n}{2}}
\end{equation}
for all $t \in [-1,0)$ and $\tilde y \in \tilde M_i$ in the rescaled flows $(\tilde M_i,\tilde g_i(t), (\tilde y_i,0))$, where $l$ denotes the reduced length function of Perelman. The lower bound follows from Perelman's Harnack inequality \cite{P}, and the upper bound follows from Zhang's Theorem \ref{t:upper_rf_heat_kernel}, originally proved in \cite{zhang1}.

To clarify this point, note that in principle the required heat kernel estimates already follow from (\ref{e:eps3}) on $B_{d_i}(\tilde y_i,0)$
and neither depend on any special structure of the Ricci flow nor on the assumed value of $C$. We used some results from \cite{P, zhang1} instead just for the sake of simplicity.

(4) As usual, let us write
\begin{align}
H(\tilde y_i, 0 \,|\, \tilde y,t) \equiv (4\pi |t|)^{-\frac{n}{2}}e^{-f_i(\tilde{y},t)}.
\end{align}
Then the two bounds in (\ref{e:aux_hk_bounds}) together with the local regularity (\ref{e:eps3}) tell us that
\begin{align}\label{e:eps5}
f_i(\tilde y,t) \to f_\infty(\tilde y,t)
\end{align}
smoothly on compact subsets because the heat kernels $H_{\tilde y_i}$ satisfy uniform derivative bounds. Now the entropy smallness (\ref{e:eps0}) together with Proposition \ref{p:nash_entropy} shows that
\begin{align}
\int_{-\frac{1}{16}}^{0} 2|t|(1 - 16|t|)\int_{\tilde{M}_i} \left|\Ric[\tilde{g}_i] + \nabla^2f_i - \frac{\tilde{g}_i}{2|t|} \right|^2 d\nu_{\tilde{y}_i}(t)\, dt \leq \frac{1}{i}.
\end{align}
Thus, by Fatou's lemma, the function $f_\infty$ constructed in (\ref{e:eps5}) is a soliton potential for the limiting Ricci flow with singular time $t = 0$. Hence the $t$-time slice of the limiting flow is isometric to the $16|t|$-rescaling of the $(-\frac{1}{16})$-time slice. But then the only way for the curvature to stay bounded as $t \to 0$ is if $(\tilde{M}_\infty,\tilde g_\infty(t))$ is flat for all $t$. This contradicts (\ref{e:eps4}) and thus proves the lemma.
\end{proof}

We are now in good shape to in fact prove a strengthening of our main theorem.

\begin{theorem}\label{t:eps_regularity_2}
Theorem \ref{t:eps_regularity} is true and we can even replace \eqref{e:entropy_small} by $\cN_{x_0}(s) \geq -\epsilon$.
\end{theorem}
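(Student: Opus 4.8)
The plan is to deduce Theorem~\ref{t:eps_regularity_2} from Proposition~\ref{p:eps_regularity_entropy}, using Theorem~\ref{t:ent_cont} and the monotonicity of the pointed Nash entropy in the scale to trade the ``all $y$ in a ball'' hypothesis of Proposition~\ref{p:eps_regularity_entropy} for the single-point hypothesis $\cN_{x_0}(s)\ge-\epsilon$. The first observation is that the entropy hypothesis \eqref{e:eps_assumption_entropy} is used in the proof of Proposition~\ref{p:eps_regularity_entropy} \emph{only} at the point $y_i$ realizing the minimum of $w$ in step~(1), and that point automatically satisfies $r_{|\Rm|}(y_i,0)=w(y_i)\,d_{g_i(0)}(y_i,\partial B_\delta(x_i,0))<\delta$; the rest of the argument (rescaling by $r_{|\Rm|}(y_i,0)$, local regularity from minimality, no local collapsing, the heat-kernel bounds, the Fatou step) never refers to the hypothesis again. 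Hence Proposition~\ref{p:eps_regularity_entropy} remains valid with \eqref{e:eps_assumption_entropy} weakened to ``$\cN_y(t(y))\ge-\epsilon$ for all $y\in B_\delta(x,0)$ with $r_{|\Rm|}(y,0)<\delta$'' --- and for such $y$ one has $t(y)=-r_{|\Rm|}(y,0)^2$, since $r_{|\Rm|}(y,0)^2<\delta^2\le T$. Finally, $\cW_{x_0}(s)\le\cN_{x_0}(s)$ by Proposition~\ref{p:nash_entropy}(1), so it suffices to prove $r_{|\Rm|}(x_0,0)^2\ge\epsilon|s|$ under the assumption $\cN_{x_0}(s)\ge-\epsilon$, which then also yields Theorem~\ref{t:eps_regularity}.

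To carry this out, rescale so that $s=-1$ and $T\ge1$; then \eqref{e:basic_control} becomes exactly \eqref{e:gen_assns_2} at unit scale and $\cN_{x_0}$ is unchanged. Let $\epsilon_0=\epsilon_0(n,C)>0$ be the constant furnished by Proposition~\ref{p:eps_regularity_entropy} in the weakened form above, and let $C'=C'(n,C)$ be the Lipschitz constant of Theorem~\ref{t:ent_cont}, so that $|\cN_{x_1}(-1)-\cN_{x_2}(-1)|\le C'\,d_{g(0)}(x_1,x_2)$. Put $\delta\equiv\min\{\tfrac12,\epsilon_0/(2C')\}$, which satisfies $\delta\le\sqrt T$ and $C'\delta\le\epsilon_0/2$, and $\epsilon\equiv\min\{\epsilon_0/2,(\epsilon_0\delta)^2\}$, both depending only on $n$ and $C$; suppose $\cN_{x_0}(-1)\ge-\epsilon$. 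For any $y\in B_\delta(x_0,0)$ with $r_{|\Rm|}(y,0)<\delta$ we have $t(y)=-r_{|\Rm|}(y,0)^2\in(-1,0)$, so Proposition~\ref{p:nash_entropy}(2) gives $\cN_y(t(y))\ge\cN_y(-1)$, and Theorem~\ref{t:ent_cont} gives $\cN_y(-1)\ge\cN_{x_0}(-1)-C'\delta\ge-\epsilon-C'\delta\ge-\epsilon_0$. Thus the weakened hypothesis of Proposition~\ref{p:eps_regularity_entropy} holds for $x=x_0$ and this $\delta$, and its conclusion at $y=x_0$ reads $r_{|\Rm|}(x_0,0)\ge\epsilon_0\,d_{g(0)}(x_0,\partial B_\delta(x_0,0))\ge\epsilon_0\delta$, i.e.\ $r_{|\Rm|}(x_0,0)^2\ge(\epsilon_0\delta)^2\ge\epsilon$. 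Undoing the rescaling turns this into $r_{|\Rm|}(x_0,0)^2\ge\epsilon|s|$, which is \eqref{e:regularity_bound}.

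The only genuinely new point compared with Proposition~\ref{p:eps_regularity_entropy} is the passage from a one-point entropy bound to an entropy bound on a small ball, and this is precisely what the Lipschitz estimate of Theorem~\ref{t:ent_cont} supplies once the radius $\delta=\delta(n,C)$ is taken small enough that the loss $C'\delta$ does not swamp $\epsilon_0$; no analytic input beyond Theorems~\ref{t:ineqs} and \ref{t:ent_cont} is needed. Equivalently, one may run the contradiction argument of Section~\ref{s:ptd_ent_eps_reg} directly: given would-be counterexamples $(M_i,g_i(t),x_i)$ with $\cN_{x_i}(-1)\ge-1/i$ but $r_{|\Rm|}(x_i,0)\to0$, perform a Perelman-type point selection inside $B_{\rho_i}(x_i,0)$ with $\rho_i\to0$ and $\rho_i/r_{|\Rm|}(x_i,0)\to\infty$ to obtain $y_i$ with $r_{|\Rm|}(y_i,0)\to0$, $d_{g_i(0)}(y_i,x_i)\le\rho_i$, and $r_{|\Rm|}\ge\tfrac12\,r_{|\Rm|}(y_i,0)$ on a ball of radius $A_i\,r_{|\Rm|}(y_i,0)$ with $A_i\to\infty$; then Theorem~\ref{t:ent_cont} forces $\cN_{y_i}(-1)\to0$, Proposition~\ref{p:nash_entropy}(2) propagates this down to the scale $-r_{|\Rm|}(y_i,0)^2$, and the blow-up at scale $r_{|\Rm|}(y_i,0)$ converges (via the heat-kernel bounds, no-local-collapsing and Fatou steps from the proof of Proposition~\ref{p:eps_regularity_entropy}) to a complete Ricci flow of bounded curvature with $r_{|\Rm|}(\tilde y_\infty,0)=1$ on which $\cN_{\tilde y_\infty}\equiv0$; by Proposition~\ref{p:nash_entropy}(4) this limit is a gradient shrinking soliton with singular time $0$, hence flat as at the end of the proof of Proposition~\ref{p:eps_regularity_entropy} --- the desired contradiction.
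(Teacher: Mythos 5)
Your proof is correct and takes essentially the same route as the paper: rescale so $|s|=1$, use the Lipschitz bound of Theorem \ref{t:ent_cont} with $\delta\sim\epsilon_0/C'$ to spread the one-point Nash entropy bound over $B_\delta(x_0,0)$, pass to the scales $t(y)$ by the monotonicity in Proposition \ref{p:nash_entropy}(2), and conclude via Proposition \ref{p:eps_regularity_entropy} together with $\cW_{x_0}\le\cN_{x_0}$. If anything you are slightly more careful than the paper's two-line proof: you make the monotonicity step and the constant bookkeeping for $r_{|\Rm|}(x_0,0)^2\ge\epsilon|s|$ explicit, and your weakened-hypothesis version of Proposition \ref{p:eps_regularity_entropy} (entropy smallness needed only at points of small regularity scale, which is indeed all the proof uses) is a harmless substitute for the paper's normalization $T=1$.
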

\begin{proof}
Define $\delta \equiv \min\{1,\frac{1}{2C'_{\ref{t:ent_cont}}}\epsilon_{\ref{p:eps_regularity_entropy}}\}$ and $\epsilon \equiv \frac{\delta}{2}\epsilon_{\ref{p:eps_regularity_entropy}}$. We can assume $-s = T = 1 \geq \delta$. Then
$$
\forall x\in B_\delta(x_0,0): \cN_x(1)\geq -\epsilon_{\ref{p:eps_regularity_entropy}}
$$
by Theorem \ref{t:ent_cont}. Thus Proposition \ref{p:eps_regularity_entropy} tells us that $r_{|\Rm|}(x_0,0) \geq \epsilon_{\ref{p:eps_regularity_entropy}}\delta \geq \epsilon$.
\end{proof}

\end{document}